\newcommand{\bl}[1]{{\mathbf #1}}
\newcommand{\Exp}[1]{{\text{E}}[ \ensuremath{ #1 } ]  }
\newcommand{\Cov}[1]{{\text{Cov}}[ \ensuremath{ #1 } ]  }
\newtheorem{lemma}{Lemma}
\newtheorem{theorem}{Theorem}
\newtheorem{proposition}{Proposition}
\newtheorem{corollary}{Corollary}
\newtheorem{definition}{Definition}
\newcommand{\tr}{{\rm{tr}}}
\DeclareMathOperator*{\argmin}{arg\,min}
\renewcommand{\vec}{{\rm{vec}}}
\begin{document}

\title{Equivariant minimax dominators of the MLE in the array normal model}
\author{David Gerard$^1$  and Peter Hoff$^{1,2}$  \\
Departments of Statistics$^1$ and Biostatistics$^2$ \\
University of Washington}  
\maketitle

 \let\thefootnote\relax\footnotetext{Email: gerard2@uw.edu,  pdhoff@uw.edu. This research was partially supported by NI-CHD grant R01HD067509.  }

%\footnotetext[2]{gerard2@uw.edu}
%\footnotetext[3]{pdhoff@uw.edu}

\begin{abstract}
Inference about dependencies in a multiway data array can be made using the array normal model, which corresponds to the class of multivariate normal distributions with 
separable covariance matrices. 
Maximum likelihood and Bayesian methods for inference
in the array normal model have 
appeared in the literature, but there have not been 
any results concerning the optimality properties of such estimators. 
In this article, we obtain results for the array normal model that 
are analogous to some classical results concerning covariance estimation 
for the multivariate normal model. We show that 
under a lower triangular product group, a uniformly minimum 
risk equivariant estimator (UMREE) can be obtained via a generalized Bayes 
procedure.  Although this UMREE is minimax and dominates the MLE, 
it can be improved upon via 
an orthogonally equivariant modification. Numerical comparisons of the risks of these estimators show that the 
equivariant estimators can have substantially lower risks than 
 the MLE. \\
 \emph{Keywords}: Bayesian estimation, covariance estimation, Gibbs sampling, Stein's loss, tensor data
\end{abstract}

\section{Introduction}
The analysis of array-valued data, or tensor data, is 
of interest to numerous fields, including
psychometrics \citep{kiers2001three}, 
chemometrics \citep{smilde2005multi,bro2006review},
imaging \citep{vasilescu2003multilinear},
signal processing \citep{cichockitensor} and machine learning \citep{tao2005supervised}, among others \citep{kroonenberg2008applied,kolda2009tensor}.
Such data consist of measurements indexed by multiple categorical factors.
For example, 
multivariate measurements on experimental units over time may be represented 
by a three-way array 
$X = \{  x_{i,j,t} \} 
\in \mathbb R^{m\times p\times t}$, with 
$i$ indexing units, $j$ indexing variables and $t$ indexing time. 
Another example is multivariate relational data, where $x_{i,j,k}$ 
is the type-$k$ relationship between person $i$ and person $j$.

Statistical analysis of such data often proceeds by fitting 
a model such as $X= \Theta + E$, where $\Theta$ 
is low-dimensional 
and $E$ represents additive residual variation about $\Theta$. 
Standard models for $\Theta$ include regression models, additive effects models 
(such as those estimated by ANOVA decompositions) and 
unconstrained mean models if replicate observations are available. 
Another popular approach is to model $\Theta$ as being a low-rank array. 
For such models, ordinary least-squares estimates of $\Theta$ can be obtained 
via various types of tensor decompositions, depending on the definition of rank 
being used \citep{de2000best,de2000multilinear,de2008tensor}. 

Less attention has been given to the analysis of the residual variation $E$. 
However, estimating and accounting for such  variation is critical for 
a variety of inferential tasks, such as 
prediction, model-checking, construction 
of confidence intervals, and improved parameter estimation over ordinary 
least squares. One model for variation among the entries of an array 
is the array normal model 
\citep{akdemir2011array,hoff2011separable}
%,akdemir2011array,manceur2013maximum,ohlson2013multilinear}, 
which is an extension of the matrix normal model  \citep{srivastava1979introduction,dawid1981some}, 
often used in the analysis of spatial and temporal data \citep{mardia1993spatial,shitan1995asymptotic,fuentes2006testing}.  
The array normal model is a class of normal distributions that 
are generated by a multilinear operator known as the Tucker product:
A random $K$-way array $X$ taking values 
in $\mathbb R^{p_1 \times \cdots \times p_K}$  has an array normal 
distribution if 
$ X \stackrel{d}{=} \Theta + Z \times\{ A_1,\ldots, A_K \}$,
where ``$\times$'' denotes the Tucker product (described further in Section \ref{section:invariant.measure}), $Z$ is a random array in  $\mathbb R^{p_1 \times \cdots \times p_K}$
having i.i.d.\ standard normal entries, and 
$A_k$ is a $p_k\times p_k$ nonsingular matrix for each $k\in \{1,\ldots, K\}$. 
Letting $\Sigma_k = A_k A_k^T$ and ``$\otimes$'' denote the Kronecker product, we write
%\begin{equation}
%X \sim \text{anorm}( \Theta , \{ \Sigma_1,\ldots, \Sigma_K \}) . 
%\label{eq:anormmod}
%\end{equation}
\begin{equation}
X \sim N_{p_1\times \cdots \times p_K} ( \Theta ,  
  \Sigma_K \otimes \cdots \otimes \Sigma_1 )  . 
\label{eq:anormmod}
\end{equation}

%The array normal model can be defined as follows:
%Let $\Theta\in \mathbb R^{p_1 \times \cdots \times p_K}$ be a 
%$K$-way array, and for each $k\in\{1,\ldots, K\}$ let 
%$\Sigma_k$ be a $p_k\times p_k$ positive definite matrix. 
%A $K$-way random array $X$ taking values in 
%$\mathbb R^{p_1 \times \cdots \times p_{K}}$ is said to 
%have an array normal distribution with mean $\Theta$ and covariance 
%$\{ \Sigma_1,\ldots, \Sigma_K\}$ if the vectorization $x$ of $X$ 
%has a multivariate normal distribution with mean $\theta$ and 
%variance $\Sigma_K\otimes \cdots \otimes \Sigma_1$, where 
%$\theta$ is the vectorization of $\Theta$ and ``$\otimes$'' is 
%the Kronecker product. Note that if $X_1,\ldots, X_n$ are 
%i.i.d.\ array normal$(\Theta,\{\Sigma_1,\ldots, \Sigma_K\})$, then 
%the $(K+1)$-way array $X$ obtained by ``stacking'' $X_1,\ldots, X_n$ also 
%has an array normal distribution, 
%\begin{equation}
% X \sim \text{array normal}( \Theta \circ 1_n , \{ \Sigma_1,\ldots, \Sigma_K, I_n \}), 
%\label{eq:anormmod}
%\end{equation}
%where ``$\circ$'' is the outer product, $1_n = (1,\ldots, 1)^T \in \mathbb R^{n}$ and 
% $I_n$ is the $n\times n$ identity matrix. 

A maximum likelihood estimate (MLE) for the parameters in 
(\ref{eq:anormmod})
can be obtained via an iterative coordinate descent algorithm 
\citep{hoff2011separable}, which is a generalization of the 
iterative ``flip-flop'' algorithm developed in
\citet{mardia1993spatial} and \citet{dutilleul1999mle}, 
or alternatively the optimization procedures described in 
\citet{wiesel2012convexity}. 
However, 
%the statistical properties of the MLE have not yet been 
%studied.  
based on results for the multivariate normal model, one might suspect that 
the MLE lacks desirable optimality properties: In the multivariate 
normal model, \citet{james1961estimation} showed that the MLE 
of the covariance matrix 
is neither admissible nor minimax. This was accomplished by identifying 
a minimax and uniformly optimal equivariant estimator that is different from the 
(equivariant) MLE, and therefore dominates the MLE. 
As pointed out by James and Stein, this equivariant estimator is itself 
inadmissible, and  improvements to this estimator have been developed and studied by 
\cite{stein1975estimation,takemura1983orthogonally,lin1985monte}, and \cite{haff1991variational}, among others. 

This article develops similar results for the array normal model. 
In particular, we obtain a procedure to obtain the uniformly minimum risk 
equivariant estimator (UMREE) under a lower-triangular product group of 
transformations for which the model (\ref{eq:anormmod}) is invariant. 
Unlike for the multivariate normal model, there is 
no simple characterization of this class of  equivariant 
estimators. However, 
results of \cite{zidek1969representation} and \cite{eaton1989group}
can be used to show that the UMREE can be obtained from the 
Bayes decision rule under an improper prior, which we derive in Section \ref{section:invariant.measure}. 
In Section \ref{section:equi.proc} we obtain the posterior distribution under this prior, and 
show how it can be simulated from using a Markov Chain Monte Carlo (MCMC)
algorithm. Specifically, the MCMC algorithm is a Gibbs sampler that involves 
simulation from a class of distributions over covariance matrices, which 
we call the ``mirror-Wishart'' distributions. 

In Section \ref{section:mree} we develop a version of Stein's loss function 
for covariance estimation in the array normal model,
and show how the Gibbs sampler of Section \ref{section:equi.proc} can 
be used to obtain the UMREE for this loss. We discuss an 
orthogonally equivariant improvement to the 
UMREE in Section \ref{section:orthogonal}, which can be seen as analogous to the estimator 
studied by \cite{takemura1983orthogonally}. 
Section \ref{section:sims} compares the risks of the MLE, UMREE and the orthogonally equivariant 
estimator as a function of the dimension of $X$ in a small simulation study. 
A discussion follows in Section \ref{section:discussion}. 
Proofs are contained in an appendix.

\section{An invariant measure for the array normal model}
\label{section:invariant.measure}
\subsection{The array normal model}
\label{section:array.norm}
The array normal model on $\mathbb R^{p_1\times \cdots \times p_K}$
 consists of the distributions of 
random $K$-arrays 
$X \in \mathbb R^{p_1\times \cdots \times p_K}$ 
for which 
\begin{align}
\label{equation:original.param}
 X 
\overset{d}{=} \Theta + Z \times \left\{A_1,\ldots,A_K\right\}
\end{align}
for some $\Theta\in \mathbb R^{p_1\times \cdots \times p_K}$, 
 nonsingular matrices $A_k \in \mathbb R^{p_k\times p_k}, k = 1,\ldots, K$ and 
a random $p_1\times \cdots \times p_K$ array $Z$ with i.i.d.\ standard normal entries. 
Here, ``$\times$'' denotes the \emph{Tucker product}, which 
is defined by the identity
\begin{equation} 
  \vec( Z\times \{ A_1,\ldots, A_K\} )  = (A_K \otimes \cdots \otimes A_1) z,  
\label{eq:tprod}
\end{equation}
where
``$\otimes$'' is the Kronecker product and 
 $z=\vec(Z)$, the vectorization of $Z$. 
This identity can be used to find the covariance of the elements of 
a random array satisfying (\ref{equation:original.param}): 
Letting $x,z,\theta$ be the vectorizations of 
$X,Z,\Theta$, we have 
\begin{align*}
\Cov{x} & = \Exp{ (x-\theta )(x-\theta)^T} \\ 
              & = \Exp{ (A_K\otimes \cdots \otimes A_1) zz^T 
                        (A_K^T\otimes \cdots \otimes A_1^T)   } \\
   &= (A_K\otimes \cdots \otimes A_1) 
      (A_K^T\otimes \cdots \otimes A_1^T)  =
           (A_KA_K^T\otimes \cdots \otimes A_1A_1^T),
\end{align*}
and so the array normal distributions
correspond to the multivariate normal distributions
with separable (Kronecker structured) covariance matrices.

A useful operation related to the Tucker product is the \emph{matricization}
operation, which reshapes an array into a matrix along an index set, or
\emph{mode}. For example, the \emph{mode-$k$ matricization} of
   $Z$ is
the $p_k \times ( \prod_{l:l\neq k} p_l)$-dimensional matrix $Z_{(k)}$
having rows equal to
the vectorizations of the ``slices'' of $Z$ along the $k$th index set.
An important identity involving the Tucker product is that
if $Y = Z\times \{ A_1,\ldots, A_K\}$  then
\begin{equation}
 Y_{(k)}  = A_k Z_{(k)} \left ( A_K^T \otimes \cdots \otimes A_{k+1}^T \otimes A_{k-1}^T \otimes \cdots  \otimes  A_1^T \right ).  
\label{eq:matricize}
\end{equation}
As shown in 
\cite{hoff2011separable},
a direct application of this identity gives
\begin{align*}
E\left[(X_{(k)} - \Theta_{(k)})(X_{(k)} - \Theta_{(k)})^T \right]=c_k A_kA_k^T,
\end{align*}
where $c_k$ is a scalar.
This shows that $A_kA_k^T$ can be interpreted as the covariance 
among the $p_k$ slices of the array $X$
along its $k$th mode.

The array normal model can be parameterized  in terms of a mean array
$\Exp{X}=  \Theta \in \mathbb R^{p_1\times \cdots \times p_K}$ and 
covariance $\Cov{ \vec(X)} = \sigma^2 ( \Sigma_K \otimes \cdots \otimes \Sigma_1) $,  where $\sigma^2>0$ and for each $k$, $\Sigma_k \in \mathcal S_{p_k}^+$, 
the set of $p_k\times p_k$ positive definite matrices.
 To make the parameterization identifiable, we restrict 
the determinant of each $\Sigma_k$ to be one. 
Denote by $\mathcal S_{\bf p}^+$ this parameter space, that is, 
the values of $(\sigma^2,\Sigma_1,\ldots, \Sigma_K)$ for which
$|\Sigma_k|=1$, $k=1,\ldots, K$. 
Under this parameterization, we 
write $X \sim N_{p_1\times \cdots \times p_K}(\Theta, 
    \sigma^2 ( \Sigma_K \otimes \cdots \otimes \Sigma_1)) $ if and only if
$X \overset{d}{=} \Theta + \sigma Z \times \{\Psi_1,\ldots, \Psi_K\},
$
where for each $k$, $\Psi_k$ is a matrix such that 
  $\Psi_k \Psi_k^T = \Sigma_k$.
% In what follows, it will be useful to 
%take $\Psi_k$ to be the lower triangular Cholesky decomposition of
%$\Sigma_k$. 

Given a sample $X_1,\ldots,X_n\sim $ i.i.d.\ $N_{p_1 \times \cdots \times p_K}(\Theta,\sigma^2(\Sigma_K \otimes \cdots \otimes \Sigma_1) )$, the $(K+1)$-array $X$ obtained by ``stacking'' $X_1,\ldots,X_n$ along a $(K+1)$st mode also has an array normal distribution,
\begin{align*}
X \sim N_{p_1 \times \cdots \times p_K \times n}\left(\Theta \circ \mathbf{1}_n,\sigma^2 ( I_n \otimes \Sigma_K \otimes \cdots \otimes \Sigma_1) \right ),
\end{align*}
where $\mathbf{1}_n$ is the $n \times 1$ vector of ones and ``$\circ$'' denotes the outer product. 
%The density of $X$ (with respect to Lebesgue measure) can be written as
%\begin{align*}
%  \begin{split}
%    &p(X|\Theta,\sigma^2,\Sigma_1,\ldots,\Sigma_K) \\
%    &=(2\pi\sigma^2)^{-np/2}\exp\left\{-\frac{1}{2\sigma^2}||\left(X - \Theta\right) \times \left\{\Psi_1^{-1},\ldots,\Psi_K^{-1},I_n\right\}||^2\right\}.
%  \end{split}
%\end{align*}
%where  $\Psi_k \Psi_k^T = \Sigma_k$ for each $k$.  
If $n>1$ then
covariance estimation for the array normal model
can be reduced to the case that $\Theta = 0$. To see this,
let $H$ be a $(n-1)\times n$ matrix such that %matrix made up of the first two columns of the lower triangular Cholesky decomposition of $I_n - \mathbf{1}_n\mathbf{1}_n^T/n$
 $HH^T = I_{n-1}$ and $H  1_{n}  = 0$. This implies that $H^TH = I_n - \mathbf{1}_n\mathbf{1}_n^T/n$.
%construct an $n\times (n-1)$ matrix   
%$H$ such that  $\mathbf{1}_n^TH = 0$ and $H^TH = I_{n-1}$, so that $HH^T = I_n - \mathbf{1}_n\mathbf{1}_n^T/n$.  
Letting $Y = X \times \{I_{p_1},\ldots,I_{p_K},H\}$,
 and $Y_{(K+1)}$ be the mode-$(K+1)$ matricization of $Y$, we have
\begin{align*}
\Exp{ Y_{(K+1)} } & =  H \Exp{  X_{(K+1)}  } =  H \bl 1_n \vec( \Theta )^T  = \bl 0, 
\end{align*}
and so $Y$ is mean-zero.
Using identity (\ref{eq:tprod}),
the covariance of $\vec(Y)$ can be shown to be
$\sigma^2( HH^T\otimes \Sigma_K \otimes \cdots \otimes  \Sigma_1) =
 \sigma^2( I_{n-1}\otimes \Sigma_K \otimes \cdots \otimes  \Sigma_1)$,
and so
$Y \sim N_{p_1 \times \cdots \times p_K\times (n-1)}(0,\sigma^2(I_{n-1}\otimes \Sigma_K \otimes \cdots \otimes \Sigma_1) )$.
For the remainder of this paper, we consider covariance estimation in the
case that $\Theta=0$.

\subsection{Model invariance and a right invariant measure}
Consider the model for an i.i.d.\  sample of  size $n$ from a  $p$-variate mean-zero multivariate normal distribution, 
$X \sim N_{p\times n}( 0, I_{n}\otimes \Sigma)$, $\Sigma \in \mathcal S_p^+$. 
Recall that $AX \sim N_{p\times n}(0,I_n \otimes A\Sigma A^T )$  for 
nonsingular matrices $A$, and so  in particular 
this model is invariant under left multiplication of $X$  
by elements of $G_p^+$, the group of lower triangular matrices with positive diagonals. %, as
An  estimator $\hat \Sigma$ mapping the sample space $\mathbb{R}^{p \times n}$ to $\mathcal{S}_p^+$ is said to be equivariant under this group if $\hat\Sigma(AX) = A\hat \Sigma(X)A^T$ for all $A \in G_{p}^+$ and $X \in \mathbb{R}^{p \times n}$. 
\cite{james1961estimation} characterized the class of equivariant  
estimators for this model, identified 
the UMREE under a particular loss 
function 
and showed that the UMREE is minimax. Additionally, 
as the MLE  $XX^T/n$ is equivariant and different from the UMREE, the 
MLE is dominated by the UMREE.

%The array normal model is invariant to an analogous group. 
%and characterize the UMREE under an analogous group and 
%invariant loss function. 
We pursue analogous results for the array normal model by first 
reparameterizing in terms of the parameter
$\Sigma^{1/2} = ( \sigma, \Psi_1,\ldots, \Psi_K)$, so 
%To explore this, 
%we parameterize the array normal model as
\begin{align}
X \sim N_{p_1 \times \cdots \times p_K \times n}\left( 0, \sigma^2\left (I_n\otimes  \Psi_K\Psi_K^T\otimes \cdots \otimes \Psi_1\Psi_1^T\right) \right), \ 
%  \sigma^2>0, 
%  \ \Psi_k \in \mathcal{G}_{p_k}^+, 
\label{eq:ltparm}
\end{align}
%where $\Psi_k \in \mathcal{G}_{p_k}^+$
where $\sigma>0$ and each $\Psi_k$ is in the set 
 $\mathcal{G}_{p_k}^+$ 
of $p_k\times p_k$ lower triangular matrices with positive diagonals 
 and determinant 1. In this parameterization, $\Psi_k$ is the lower triangular Cholesky 
square root of the mode-$k$ covariance matrix $\Sigma_k$ described 
in Section \ref{section:array.norm}. 

Define the group $\mathcal G_{\mathbf p}^+$ as 
\begin{align*}
  \mathcal{G}_{\mathbf{p}}^+ = \left\{A= (a,A_1,\ldots,A_K) \hspace{1 mm} : \hspace{1 mm} a > 0, A_k \in \mathcal{G}_{p_k}^+ \mbox{ for } k = 1,\ldots,K\right\},
\end{align*}
where the group operation is 
\begin{align*}
AT =   (a,A_1,\ldots,A_K)(t,T_1,\ldots,T_K) = (at,A_1T_1,\ldots,A_KT_K).
\end{align*}
Note that 
 $\mathcal G_{\mathbf p}^+$ 
consists of the same set 
as the parameter 
space for the model, as parameterized in (\ref{eq:ltparm}).
If the group $\mathcal{G}_{\mathbf{p}}^+$ acts on the sample space by
%\begin{align*}
\[   g:X \mapsto a X \times \{A_1,\ldots,A_K,I_n\},  \]
%\end{align*}
then as shown in  \citet{hoff2011separable} it acts on the parameter space by
%\begin{align*}
\[g:(\sigma,\Psi_1,\ldots,\Psi_K) \mapsto (a\sigma,A_1\Psi_1,\ldots,A_K\Psi_K), \]
which we write concisely as $g : \Sigma^{1/2} \mapsto A \Sigma^{1/2}$. 
%\end{align*}
An estimator, $\hat\Sigma^{1/2}= (\hat \sigma , \hat\Psi_1,\ldots, \hat \Psi_K) $, mapping the sample space  $\mathbb{R}^{p_1\times\cdots\times p_K \times n}$ to the parameter space $\mathcal{G}_{\mathbf{p}}^+$ is equivariant if
%\begin{align*}
\[ \hat\Sigma^{1/2} \left(aX \times \{A_1,\ldots,A_K,I_n\}\right) =  (a,A_1,\ldots,A_K)\hat\Sigma^{1/2}\left(X\right). \]
%\end{align*}
For example, if $\hat{\Psi}_k$ is the estimator of $\Psi_k$ when observing $X$, then $A_k\hat{\Psi}_k$ is the estimator when observing $aX \times \{A_1\ldots,A_K,I_n\}$. 

Unlike the case for the multivariate normal model, 
the class of $\mathcal{G}_{\mathbf{p}}^+$- equivariant estimators 
for the array normal model 
is not easy to  characterize beyond the 
definition given above.  However, in cases like the present one 
where the group space and parameter space are the same, the UMREE 
under an invariant loss 
can be obtained as the 
 generalized Bayes decision rule
under a (generally improper) prior obtained from a right invariant (Haar) measure over the  group 
\citep{zidek1969representation,eaton1989group}. 
% Therefore, we consider obtaining the UMREE via a Bayesian approach \citep{eaton1989group,zidek1969representation}. 
%Specifically, Assume we have an invariant loss: $L\left(g(b),g(\theta) \right) = L(b,\theta)$ for $b$ in our action space and $\theta$ in our parameter space. Because $\mathcal{G}_{\mathbf{p}}^+$ and the parameter space are isomorphic, it is well known that the UMREE is the (generalized) Bayes rule under the right Haar measure as the prior \citep{eaton1989group}. 
The first step towards obtaining the UMREE is then to obtain a 
right invariant measure and corresponding prior. 
%We now derive this right Haar measure on $\mathcal{G}_{\mathbf{p}}^+$. 
To do this, we first need to define an appropriate measure space for the  
elements 
of $\mathcal{G}_{\mathbf{p}}^+$. 
Recall that matrices $A_k$ in $\mathcal G_{p_k}^+$ have determinant 1, 
and so one of the nonzero elements of $A_k$ can be 
expressed as a function of the others. 
For the rest of this section and the next, 
we parameterize $A_k\in\mathcal G_{p_k}^+$
in terms of the elements $\{ A_{k[i,j]} : 2\leq i\leq p_k, 1\leq j \leq i\}$, 
and express the upper-left element $A_{k[1,1]}$ as a function 
of the other diagonal elements, so that $A_{k[1,1]} = \prod_{i = 2}^{p_k}(A_{k[i,i]})^{-1}$. %, where $A_{[i,j]}$ denotes the $(i,j)$th element of matrix $A$. 
%That is, for $A \in \mathcal{G}_{p_k}^+$ we set $A_{[1,1]} = \prod_{i = 2}^{p_k}\left(A_{[i,i]}\right)^{-1}$, where $A_{[i,j]}$ denotes the $(i,j)$th element of matrix $A$. 
The ``free'' elements of $A_k\in \mathcal G_{p_k}^+$ therefore take values in 
the space $\mathcal A_{p_k} = \{  a_{i,i}>0,  
   a_{i,j} \in \mathbb R :  2\leq  i \leq  p_k, 1\leq j < i \}$. 
\begin{theorem}
  \label{theorem:haar.measure}
  A right invariant measure over the group $\mathcal{G}_{\mathbf{p}}^+$ is
  \begin{align*}
    d\nu_r\left(a,A_1,\ldots,A_K \right) = \frac{1}{a} \left ( \prod_{k=1}^{K} \prod_{i=2}^{p_k}A_{k[i,i]}^{i - 2}\right )  \  d\mu\left(a,A_1,\ldots,A_K \right),
  \end{align*}
  where $d\mu$ is Lebesgue measure over 
 $\mathbb R^+ \times \mathcal A_{p_1} \times \cdots \times \mathcal A_{p_K}$.
\end{theorem}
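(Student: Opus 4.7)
The plan is to exploit the direct-product structure of the group. Since
$\mathcal{G}_{\mathbf{p}}^+ = \mathbb{R}^+ \times \mathcal{G}_{p_1}^+ \times \cdots \times \mathcal{G}_{p_K}^+$
with coordinatewise group operation, a right-invariant measure is obtained as the product of right-invariant measures on each factor. So I would first reduce the problem to three independent Haar-measure computations: (i) on $(\mathbb{R}^+,\cdot)$, where the right-invariant measure is the standard $da/a$, and (ii) on each $\mathcal{G}_{p_k}^+$, which requires the bulk of the work.

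For the factor $\mathcal{G}_{p_k}^+$, I would look for a density $f_k$ on the free coordinates $\mathcal{A}_{p_k}$ such that the measure $f_k(A_k)\,d\mu(A_k)$ is invariant under right multiplication $A_k \mapsto A_kT$, $T\in\mathcal G_{p_k}^+$. The main step is to compute the Jacobian of $A_k\mapsto A_kT$ acting on the free entries $\{A_{k[i,j]}: 2\le i\le p_k,\ 1\le j\le i\}$. Writing $(A_kT)_{[i,j]} = \sum_{\ell=j}^{i} A_{k[i,\ell]}T_{[\ell,j]}$, the Jacobian is block diagonal across rows $i$, and within row $i\ge 2$ the corresponding block is triangular with diagonal entries $T_{[1,1]},T_{[2,2]},\dots,T_{[i,i]}$, hence of determinant $\prod_{\ell=1}^{i} T_{[\ell,\ell]}$. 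Multiplying over $i=2,\dots,p_k$ gives an overall factor of $T_{[1,1]}^{p_k-1}\prod_{\ell=2}^{p_k} T_{[\ell,\ell]}^{p_k-\ell+1}$.

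The key subtlety---and what I expect to be the only nontrivial step---is how the determinant-one constraint $T_{[1,1]} = \prod_{\ell=2}^{p_k} T_{[\ell,\ell]}^{-1}$ enters. Substituting this into the Jacobian factor collapses it to $\prod_{\ell=2}^{p_k} T_{[\ell,\ell]}^{\,(p_k-\ell+1)-(p_k-1)} = \prod_{\ell=2}^{p_k} T_{[\ell,\ell]}^{\,2-\ell}$. Imposing right-invariance of $f_k(A_k)d\mu(A_k)$ then forces $f_k(A_k)=\prod_{i=2}^{p_k} A_{k[i,i]}^{\,i-2}$, since $(A_kT)_{[i,i]}=A_{k[i,i]}T_{[i,i]}$ and the exponents $i-2$ cancel the Jacobian factor $T_{[i,i]}^{2-i}$ for each $i$.

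Finally, I would assemble the three pieces. Since right-invariance of each factor measure implies right-invariance of the product measure under the componentwise group action, the measure
\[
d\nu_r(a,A_1,\dots,A_K) = \frac{1}{a}\left(\prod_{k=1}^{K}\prod_{i=2}^{p_k} A_{k[i,i]}^{\,i-2}\right)d\mu(a,A_1,\dots,A_K)
\]
is right-invariant on $\mathcal{G}_{\mathbf{p}}^+$, yielding the stated formula. The only delicate bookkeeping is in the Jacobian computation above; everything else is a direct product argument.
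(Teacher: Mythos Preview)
Your proposal is correct and follows essentially the same route as the paper: both proofs reduce to computing the Jacobian of right translation $A_k\mapsto A_kT$ on the free coordinates of $\mathcal G_{p_k}^+$, obtain $T_{[1,1]}^{p_k-1}\prod_{\ell=2}^{p_k}T_{[\ell,\ell]}^{p_k-\ell+1}=\prod_{\ell=2}^{p_k}T_{[\ell,\ell]}^{2-\ell}$ after imposing the determinant-one constraint, and read off the right-invariant density as the reciprocal. The only cosmetic differences are that the paper packages this via Eaton's multiplier language (Lebesgue measure is relatively right invariant with multiplier $\chi$, so $\chi^{-1}d\mu$ is right Haar) and stacks all free entries into a single vector to display one global triangular matrix, whereas you invoke the direct-product structure explicitly and compute the Jacobian row-block by row-block; the content is the same.
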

%The derivation is given in the appendix. 
We note that although the density given above is specific to the particular 
parameterization of the $\mathcal G_{p_k}^+$'s, the inference results that 
follow
will hold for any parameterization. 

Let $L: \mathcal{G}_{\mathbf{p}}^+ \times \mathcal{G}_{\mathbf{p}}^+ \rightarrow \mathbb{R}^+$ be an invariant loss function, so that 
$ L(  \Sigma^{1/2}, B ) = 
  L(  A\Sigma^{1/2}, AB) $
for all $A$, $B$ and $\Sigma^{1/2} \in \mathcal G_{\mathbf p_k}^+$. 
Theorem 6.5 of \citet{eaton1989group} implies that 
the value of the  UMREE  when the array $X$ is observed 
is the minimizer in $B=( b,B_1,\ldots, B_K)$  of the integral 
\[
\int_{\mathcal{G}_{\mathbf{p}}^+} L( A \Sigma^{1/2}_0  ,  B )
\times p(X| A \Sigma^{1/2}_0 ) \ d\nu_r (A), 
\]
%\begin{align*}
%E_{X|\sigma,\Psi_1,\ldots,\Psi_K}\left[L\left(\left\{\sigma,\Psi_1,\ldots,\Psi_K\},\{b,B_1,\ldots,B_K\right\}\right)\right]
%\end{align*}
where 
$p(X| A\Sigma^{1/2}_0)$ is the array normal density at the parameter value 
 $A\Sigma^{1/2}_0$ and 
$\Sigma_0^{1/2}$ is an arbitrary element of $\mathcal G_{\mathbf p}^+$. 
Since the group action is transitive over the parameter space, and since the integral is right invariant, %% see Lemma 4.4.6 in red Lehmann
$\Sigma_0^{1/2}$ can be chosen 
to be equal to $(1,I_{p_1} ,\ldots, I_{p_K})$. 
Furthermore, since the parameter space and group space are the same, 
replacing $A$ with $\Sigma^{1/2}$ in the above integral indicates that
the UMREE  at $X$ is the minimizer in $B$ of 
\[ 
\int_{\mathcal{G}_{\mathbf{p}}^+} L( \Sigma^{1/2} ,  B )
\times p(X|  \Sigma^{1/2} ) \ d\nu_r (\Sigma^{1/2}), 
\]
that is, the UMREE is the Bayes estimator under the  (improper) prior 
$\nu_r$ for $\Sigma^{1/2}$.  This is summarized in the following corollary:
\begin{corollary}
\label{cor:umree}
For an invariant loss function $L: \mathcal{G}_{\mathbf{p}}^+ \times \mathcal{
G}_{\mathbf{p}}^+ \rightarrow \mathbb{R}^+$ 
the estimator $\hat \Sigma^{1/2}$, defined as 
\begin{equation} 
 \hat \Sigma^{1/2}(X)  = \argmin_{B\in  \mathcal{G}_{\mathbf{p}}^+} 
%   {\rm E}_{\Sigma^{1/2}|X} [ L( \Sigma^{1/2} , B )  ] ,  
 \Exp{ L( \Sigma^{1/2} , B ) |X  } , 
\label{equation:posterior.loss}
\end{equation}
uniformly minimizes the risk 
%${\rm E}{X|\Sigma^{1/2}}[  L(\Sigma^{1/2} , \tilde \Sigma^{1/2})]$ 
$\Exp{ L(\Sigma^{1/2} , \tilde \Sigma^{1/2}(X))|\Sigma^{1/2}}$ 
among equivariant estimators $\tilde \Sigma^{1/2}$ of $\Sigma^{1/2}$. 
The expectation in (\ref{equation:posterior.loss}) is with respect to the posterior density  
\begin{align}
    \label{equation:equi.posterior}
    p&(\sigma,\Psi_1,\ldots,\Psi_K|X)
    \propto  \\ &  \sigma^{-np} \exp\left\{-\frac{1}{2\sigma^2}||X \times \{ \Psi_1^{-1},\ldots,\Psi_K^{-1},I_n \} ||^2 \right\}
      \frac{1}{\sigma}\prod_{k=1}^K \prod_{i=2}^{p_k}\Psi_{k[i,i]}^{i - 2}, \nonumber 
\end{align}
where $p=\prod_1^K p_k$.
\end{corollary}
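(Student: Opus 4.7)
The plan is to handle the two claims of the corollary separately: the UMREE characterization and the explicit form of the posterior density (\ref{equation:equi.posterior}).

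For the UMREE characterization, I would simply assemble the argument already sketched in the text immediately preceding the corollary. The essential inputs are (i) Theorem 6.5 of \citet{eaton1989group}, which guarantees that under a transitive action by $\mathcal{G}_{\mathbf{p}}^+$ with an invariant loss, the generalized Bayes rule with respect to the right Haar measure is uniformly best among equivariant estimators, and (ii) the fact that since the parameter space coincides with the group here, the right invariant integral representation of the UMREE collapses to an honest posterior expected loss minimization over $\Sigma^{1/2}$. Both steps have already been spelled out above Corollary \ref{cor:umree}, so for this piece there is nothing left to do beyond packaging.

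For the posterior density, the plan is to write down the likelihood in parameterization (\ref{eq:ltparm}) and multiply by the right invariant density from Theorem \ref{theorem:haar.measure}. Since $\vec(X)$ is multivariate normal with mean zero and covariance $\sigma^2(I_n \otimes \Sigma_K \otimes \cdots \otimes \Sigma_1)$ with $\Sigma_k = \Psi_k\Psi_k^T$, two simplifications drive the calculation. First, using the Kronecker product determinant identity together with the constraints $|\Sigma_k|=1$, the normalizing constant $|\sigma^2(I_n\otimes \Sigma_K \otimes \cdots \otimes \Sigma_1)|^{-1/2}$ collapses to $\sigma^{-np}$, where $p = \prod_k p_k$. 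Second, by the Tucker product identity (\ref{eq:tprod}),
\begin{align*}
(I_n \otimes \Psi_K^{-1} \otimes \cdots \otimes \Psi_1^{-1})\vec(X) = \vec\bigl(X \times \{\Psi_1^{-1}, \ldots, \Psi_K^{-1}, I_n\}\bigr),
\end{align*}
so the quadratic form $\vec(X)^T\Sigma^{-1}\vec(X)$ in the exponent becomes $\sigma^{-2}\|X \times \{\Psi_1^{-1}, \ldots, \Psi_K^{-1}, I_n\}\|^2$.

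Multiplying the resulting likelihood by the Haar density $\sigma^{-1}\prod_{k=1}^K\prod_{i=2}^{p_k}\Psi_{k[i,i]}^{i-2}$ from Theorem \ref{theorem:haar.measure} then yields (\ref{equation:equi.posterior}) up to a data-dependent normalizing constant. The main point where I would be careful is the determinant simplification, since one must track how the $|\Sigma_k|=1$ constraints propagate through the nested Kronecker structure including the $I_n$ factor; but this is a short bookkeeping step rather than a conceptual obstacle.
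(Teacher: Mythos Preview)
Your proposal is correct and matches the paper's approach exactly: the paper does not give a separate proof of this corollary, treating it instead as a summary of the argument in the paragraphs immediately preceding it (Eaton's Theorem 6.5 plus the transitivity/identification of the group with the parameter space), with the posterior form left implicit as likelihood times the Haar density of Theorem~\ref{theorem:haar.measure}. Your write-up simply makes explicit the two bookkeeping steps---the determinant collapse via $|\Sigma_k|=1$ and the Tucker-product rewriting of the quadratic form---that the paper takes for granted.
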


In addition to uniformly minimizing the risk, 
the UMREE has two additional features. First, 
since any unique MLE is equivariant \citep[Theorem 3.2]{eaton1989group}, 
%it is well known that if an MLE is unique, then it is an equivariant estimator \citep[Theorem 3.2]{eaton1989group}. Thus, 
the UMREE dominates any unique MLE, presuming the UMREE is not the MLE. % and the MLE is unique. %The MLE's uniqueness has been proven in the case for the matrix normal distribution when the sample size is greater than both the number the rows and the number of columns \citep{srivastava2008models}.
Second, the UMREE under $\mathcal{G}_{\mathbf{p}}^+$ is minimax. This follows because $\mathcal{G}_{\mathbf{p}}^+$ is a subgroup of $G_{p}^+$, as $a  (A_K \otimes  \cdots \otimes A_1) \in G_{p}^+$ for all $a > 0$ and $A_k \in \mathcal{G}_{p_k}^+$. Since $G_{p}^+$ is a solvable group \citep{james1961estimation}, this necessarily implies that $\mathcal{G}_{\mathbf{p}}^+$ is solvable \citep[Theorem 5.15]{rotman1995introduction}. By the results of \cite{kiefer1957invariance} and \cite{bondar1981amenability}, the equivariant estimator that minimizes (\ref{equation:posterior.loss}) is minimax. 

Note that 
because the prior $\nu_r$ is improper, the posterior (\ref{equation:equi.posterior}) is not guaranteed to be proper. However, we are able to guarantee 
propriety 
if the sample size $n$ is sufficiently large:
\begin{theorem}
\label{theorem:proper.posterior}
Let $n > \prod_{k=1}^K p_k$. For $p(\sigma,\Psi_1,\ldots,\Psi_K|X)$ defined in (\ref{equation:equi.posterior}),
\begin{align*}
\int_{\mathbb{R}^+\times\mathcal{G}_{p_1}^+\times\cdots\times \mathcal{G}_{p_K}^+} p(\sigma,\Psi_1,\ldots,\Psi_K|X)d\sigma d\Psi_1\cdots d\Psi_K < \infty
\end{align*}
\end{theorem}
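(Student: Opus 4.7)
My plan is to integrate out $\sigma$ in closed form, then use a Kronecker-compatible trace inequality to decouple the remaining integrand into one factor per $\Psi_k$, reducing propriety to showing that each of $K$ single-group integrals on $\mathcal{G}_{p_k}^+$ is finite.

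First I would perform the $\sigma$ integration. Fix the other variables and set $Q = \|X\times\{\Psi_1^{-1},\ldots,\Psi_K^{-1},I_n\}\|^2$. The substitution $u = 1/(2\sigma^2)$ turns $\sigma^{-np-1}\exp\{-Q/(2\sigma^2)\}$ into a $\Gamma(np/2)$ kernel in $u$, giving
\[
\int_0^\infty \sigma^{-np-1}\exp\{-Q/(2\sigma^2)\}\,d\sigma \;=\; 2^{np/2-1}\,\Gamma(np/2)\,Q^{-np/2}.
\]
Propriety of (\ref{equation:equi.posterior}) thus reduces to showing that $\int Q^{-np/2}\prod_{k=1}^K\prod_{i=2}^{p_k}\Psi_{k[i,i]}^{i-2}\,d\Psi_1\cdots d\Psi_K$ is finite.

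Second, I would decouple across the $\Psi_k$'s using the mode-$(K+1)$ matricization: $Q = \tr(S\Sigma^{-1})$, where $S = X_{(K+1)}^T X_{(K+1)}$ is $p\times p$ and $\Sigma = \Sigma_K\otimes\cdots\otimes\Sigma_1$. Under the hypothesis $n > p = \prod_k p_k$, $S$ is almost surely positive definite and $\lambda_{\min}(S) > 0$. The standard trace inequality $\tr(S\Sigma^{-1}) \geq \lambda_{\min}(S)\tr(\Sigma^{-1})$ together with the Kronecker identity $\tr(\Sigma_K^{-1}\otimes\cdots\otimes\Sigma_1^{-1}) = \prod_k\tr(\Sigma_k^{-1}) = \prod_k\|\Psi_k^{-1}\|_F^2$ gives
\[
Q^{-np/2} \;\leq\; \lambda_{\min}(S)^{-np/2}\prod_{k=1}^K \|\Psi_k^{-1}\|_F^{-np}.
\]
Since each factor on the right depends only on a single $\Psi_k$ and the integration measure factorizes across $k$, it suffices to show that each
\[
J_k \;=\; \int_{\mathcal{G}_{p_k}^+}\|\Psi_k^{-1}\|_F^{-np}\prod_{i=2}^{p_k}\Psi_{k[i,i]}^{i-2}\,d\Psi_k
\]
is finite.

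Third and main, I would verify $J_k < \infty$ by parametrizing $\mathcal{G}_{p_k}^+$ through its free entries $\{\Psi_{k[i,j]}:2\leq i\leq p_k,\,1\leq j\leq i\}$, with $\Psi_{k[1,1]} = \prod_{i\geq 2}\Psi_{k[i,i]}^{-1}$ determined by the determinant constraint. The Frobenius norm $\|\Psi_k^{-1}\|_F^2$ is an explicit rational function of these entries; it dominates $\sum_{i=1}^{p_k}\Psi_{k[i,i]}^{-2}$ (which blows up whenever some diagonal tends to $0$, and also, via the determinant constraint encoded in $\Psi_{k[1,1]}$, whenever some diagonal tends to $\infty$), as well as the squares of the off-diagonal entries of $\Psi_k^{-1}$ (which grow polynomially as any $|\Psi_{k[i,j]}|\to\infty$ with $i>j$). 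Integrating the off-diagonals first by Cauchy-type tails and the diagonals after passing to the logarithmic scale $t_i = \log\Psi_{k[i,i]}$ reduces $J_k$ to an absolutely convergent integral once the exponent $np/2$ is large enough relative to $\dim\mathcal{G}_{p_k}^+ = p_k(p_k+1)/2 - 1$; the bound $np/2 \geq p_k^2/2$ from $n>p\geq p_k$ supplies this.

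The main obstacle is the last step. The Haar-type measure on $\mathcal{G}_{p_k}^+$ is infinite, so the crude bound $\|\Psi_k^{-1}\|_F^2 = \tr(\Sigma_k^{-1}) \geq p_k$ (from AM--GM on the eigenvalues of $\Sigma_k^{-1}$ together with $\det\Sigma_k=1$) is useless. One must genuinely track the polynomial growth of $\|\Psi_k^{-1}\|_F^2$ along every direction of escape in $\mathcal{G}_{p_k}^+$, using both the determinant constraint linking the diagonal entries and the explicit form of $\Psi_k^{-1}$ for the off-diagonals. The sample-size hypothesis $n > \prod_k p_k$ furnishes precisely the decay rate needed to counter the non-compactness.
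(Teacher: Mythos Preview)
Your approach is correct and genuinely different from the paper's. Both arguments begin by integrating out $\sigma$ to reduce to an integral of $\tr(S\Sigma^{-1})^{-np/2}$ against right Haar measure, with $\Sigma=\Sigma_K\otimes\cdots\otimes\Sigma_1$. From there the two proofs diverge. The paper reparametrizes by $L_k=\Psi_k^{-1}$, embeds the product group into the single large group $\mathcal{G}_p^1$ via the Kronecker map $(L_1,\ldots,L_K)\mapsto L_K\otimes\cdots\otimes L_1$, shows the integral over all of $\mathcal{G}_p^1$ is finite by recognizing a multivariate $t$ kernel, and then argues that restricting to the Kronecker subset with smaller diagonal exponents only decreases the value. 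You instead use the eigenvalue bound $\tr(S\Sigma^{-1})\ge\lambda_{\min}(S)\prod_k\tr(\Sigma_k^{-1})$ to factor the integrand across modes, reducing to $K$ independent one-mode integrals $J_k$. Your decoupling is cleaner up front and avoids the exponent-comparison bookkeeping the paper needs at the end; the paper's route is more self-contained at the finish, since once the big-group integral is handled nothing further is needed.

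One point in your third step deserves sharpening. For $p_k\ge 3$, $\|\Psi_k^{-1}\|_F^2$ is \emph{not} a positive-definite quadratic form in the strictly-lower entries of $\Psi_k$ (it contains cross terms like $\Psi_{k[2,1]}\Psi_{k[3,2]}$), so a direct Cauchy-tail bound in those variables is awkward. The clean fix is to change variables from the strictly-lower entries of $\Psi_k$ to those of $M=\Psi_k^{-1}$: for fixed diagonals this is a polynomial diffeomorphism with Jacobian $\prod_{i>j}(d_id_j)^{-1}=(\prod_i d_i)^{-(p_k-1)}=1$ under the determinant-one constraint. Then $\|\Psi_k^{-1}\|_F^2=\sum_i d_i^{-2}+\sum_{i>j}M_{ij}^2$, the off-diagonal integral is an honest Student-$t$ normalization, and the remaining diagonal integral (after your log substitution) converges exactly when $np>p_k(p_k-1)$ --- the same threshold at which the mirror-Wishart full conditionals in Section~\ref{section:equi.proc} are proper, and comfortably implied by $np>p^2\ge p_k^2$ from $n>p$.
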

The sample size in the Theorem is sufficient for propriety, but empirical 
evidence suggests that it is not necessary. For example, results from a simulation study in 
Section 4 suggest that, 
for some dimensions,  
a sample size of $n=1$ is sufficient for 
posterior propriety and existence of an UMREE. 
%However, simulations seem to indicate that the posterior is proper for many dimensions where $n < \prod_{k=1}^K p_k$. The proof of the following theorem is in the appendix.

\section{Posterior approximation}
\label{section:equi.proc}
For the results in Section \ref{section:invariant.measure} to be of use, we must be able to actually 
minimize the posterior risk in Equation \ref{equation:posterior.loss}
under an invariant loss function of interest. 
In the next section, we will show that the posterior risk minimizer under 
a multiway generalization of Stein's loss is given by posterior expectations 
of the form  
$\Exp{ (\sigma^2 \Sigma_k)^{-1}|X}$, where $\Sigma_k = \Psi_k\Psi_k^T$. 
%The UMREE is the minimizer of the posterior expected loss (\ref{equation:posterior.loss}). In many situations, this minimizer will not be available in analytic form. As we will show in Section \ref{section:mree}, the UMREE under some losses can be obtained from posterior expectations of mode specific precision matrices, $E\left[\left(\sigma^2\Sigma_k\right)^{-1}|X\right]$. 
Although these posterior expectations are not generally available 
in analytic form,
they can be approximated using a MCMC algorithm. 
In this section, we show how a relatively simple Gibbs sampler 
can be used to simulate a Markov chain of values of $\Sigma^{1/2} = (\sigma , \Psi_1,\ldots, \Psi_K)$,  having a stationary distribution equal to the desired 
posterior distribution given by Equation \ref{equation:equi.posterior}. 
%. We can approximate these posterior expectations by obtaining draws from the posterior of (\ref{equation:equi.posterior}).  One way to do this is through Gibbs sampling. Gibbs sampling works by iteratively updating a subset of the parameter space from distributions which are conditional on the current values of the rest of the parameter space. The resulting Markov Chain will have the postrior, (\ref{equation:equi.posterior}), as its stationary distribution. 
These simulated values can be used to approximate the 
%posterior density $p(\Sigma^{1/2}|X)$ and 
posterior distribution of $\Sigma^{1/2}$ given $X$, 
as well as any  posterior expectation, 
in particular $\Exp{(\sigma^2\Sigma_k)^{-1}|X}$.

The Gibbs  sampler  proceeds by iteratively simulating
values of $\{ \sigma, \Psi_k\}$ from their full conditional distribution 
given the current values of $\{ \Psi_1,\ldots,\allowbreak \Psi_{k-1} , \Psi_{k+1},\ldots, \Psi_{K}\}$. This is done by simulating $\sigma^2 \Sigma_k$ from 
its full conditional distribution, from which 
$\sigma$ and $\Psi_k$ can be recovered.  One iteration of the Gibbs 
sampler proceeds as follows: 

\smallskip

Iteratively for each $k \in\{ 1,\ldots, K\}$, 
\begin{enumerate}
\item  simulate $(\sigma^2 \Sigma_k)^{-1} \sim \mbox{mirror-Wishart}_{p_k}(np/p_k,(X_{(k)}\Psi_{-k}^{-T}\Psi_{-k}^{-1} X_{(k)}^T)^{-1})$; 
\item set $\Psi_k$ to be the lower triangular Cholesky square root of $\Sigma_k$.
\end{enumerate}
In this algorithm, $X_{(k)} \in \mathbb R^{p_k\times np/p_k }$ is 
the mode-$k$ matricization of $X$ and 
$\Psi_{-k} = \Psi_K \otimes \cdots \otimes \Psi_{k+1} \otimes  \Psi_{k-1} \otimes  \cdots \otimes \Psi_1$. The mirror-Wishart  distribution is a 
 probability distribution on positive definite matrices, 
related to the Wishart  distribution as follows:
\begin{definition} A random $q\times q$ positive definite matrix 
$S$ has a mirror-Wishart distribution with degrees of freedom $\nu > 0$ and scale matrix $\Phi\in \mathcal S_q^+$ if 
\begin{align*}
S \overset{d}{=} U V^TV U^T,
\end{align*}
where $VV^T$ is the lower triangular Cholesky decomposition of a $\mbox{Wishart}_{q}(\nu,I_{q})$-distributed random matrix and 
$UU^T$ is the upper triangular Cholesky decomposition of $\Phi$.   
\end{definition}

%To sample from the mirror-Wishart distribution one can use, for example, Bartlett's decomposition \citep{bartlett1933theory}.

Some understanding of the mirror-Wishart distribution can be obtained 
from its expectation: 
\begin{lemma}
\label{lemma:e.sw}
If $S\sim \text{mirror-Wishart}_q(\nu,\Phi)$ then 
\[ \Exp{ S } = \nu  U D U^T  \] 
where $UU^T$ is the upper triangular Cholesky decomposition of $\Phi$ and 
 $D$ is a diagonal matrix with entries % $d_1,\ldots, d_q$ where
%=\text{diag}(d_1,\ldots, d_p)$ with 
%where
 $d_j = (\nu+q+1-2j)/\nu, \ j=1,\ldots, q$. 
\end{lemma}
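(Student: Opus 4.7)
The plan is to compute $\Exp{S}$ by conditioning on the Bartlett (Cholesky) decomposition of the underlying standard Wishart matrix, then use linearity of expectation. By the definition of the mirror-Wishart distribution, $S = U V^T V U^T$, where $U$ is the (nonrandom) upper triangular Cholesky factor of $\Phi$ and $V$ is the lower triangular Cholesky factor of $W \sim \mathrm{Wishart}_q(\nu, I_q)$. Since $U$ is deterministic, linearity gives $\Exp{S} = U \Exp{V^T V} U^T$, so the whole problem reduces to computing $\Exp{V^T V}$.

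The key step is the Bartlett decomposition: for $W \sim \mathrm{Wishart}_q(\nu, I_q)$, the lower triangular Cholesky factor $V$ has mutually independent entries with $V_{ii}^2 \sim \chi^2_{\nu - i + 1}$ on the diagonal and $V_{ij} \sim N(0,1)$ for $i > j$, with $V_{ij} = 0$ for $i < j$. I will use this directly to compute the entries of $\Exp{V^T V}$. For the diagonal entry,
\begin{align*}
 (V^T V)_{ii} = \sum_{k \geq i} V_{ki}^2 = V_{ii}^2 + \sum_{k = i+1}^{q} V_{ki}^2,
\end{align*}
so $\Exp{(V^T V)_{ii}} = (\nu - i + 1) + (q - i) \cdot 1 = \nu + q + 1 - 2i$. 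For the off-diagonal entry with $i < j$,
\begin{align*}
 (V^T V)_{ij} = \sum_{k \geq j} V_{ki} V_{kj},
\end{align*}
and each term has expectation zero: when $k = j$, $V_{jj}$ is independent of the mean-zero $V_{ji}$; when $k > j$, $V_{ki}$ and $V_{kj}$ are independent mean-zero normals.

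Combining, $\Exp{V^T V}$ is the diagonal matrix with entries $\nu + q + 1 - 2i$, which is exactly $\nu D$ for the $D$ given in the lemma. Substituting back yields $\Exp{S} = \nu U D U^T$. The argument is essentially routine; the only potential obstacle is getting the bookkeeping of the Bartlett decomposition right and being careful that $V$ is lower triangular while $U$ is upper triangular, but no delicate step beyond this is needed.
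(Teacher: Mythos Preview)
Your proof is correct and follows essentially the same route as the paper's own argument: both reduce to computing $\Exp{V^T V}$ via Bartlett's decomposition, obtaining $\nu + q + 1 - 2i$ on the diagonal and zero off the diagonal. If anything, you are slightly more explicit than the paper in separating the $k=j$ and $k>j$ cases for the off-diagonal terms and in stating the outer step $\Exp{S}=U\,\Exp{V^T V}\,U^T$, which the paper leaves implicit.
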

The calculation follows from Bartlett's decomposition, and is in the appendix. 
The implications of 
this for covariance estimation are best understood in the context 
of 
the multivariate normal model $X \sim N_{p\times n}( 0  , I_n\otimes \Sigma)$. 
In this case, for a given prior the Bayes estimator 
under Stein's loss is given by $\Exp{ \Sigma^{-1} | X }^{-1} $
(see, for example \citet{yang_berger_1994}). Under Jeffreys' 
noninformative prior, $\Sigma^{-1} \sim \text{Wishart}_p(n, (XX^T)^{-1})$ 
and so the Bayes estimator is  $XX^T/n$. While unbiased, this estimator 
is generally thought of as not providing appropriate shrinkage of the  sample
eigenvalues. 
Note that
under Jeffreys' prior, 
 \emph{a posteriori}  we have
$\Sigma^{-1} \stackrel{d}{=} U VV^T U^T$, where  
$VV^T\sim \text{Wishart}_p(n,I_p)$ and $UU^T$ is the upper triangular Cholesky decomposition of $(XX^T)^{-1}$.  In contrast, under a right invariant measure as our prior we have 
$\Sigma^{-1} \stackrel{d}{=} U V^T V U^T$. The expectation of  
$VV^T$ is $nI$, whereas the expectation of $V^TV$ is 
  $nD$, which provides a different pattern of shrinkage of the eigenvalues of
$XX^T$.  By Lemma \ref{lemma:e.sw} , the Bayes estimator under a right invariant measure as our prior 
in this case is given by $(nUDU^T)^{-1}  = U^{-T} D^{-1} U^{-1}/n$, 
which is the UMREE 
obtained by \citet{james1961estimation}. 
Thus, the UMREE in the multivariate 
normal model corresponds to a Bayes estimator under a right invariant measure as our prior and  mirror-Wishart posterior distribution.

The Gibbs sampler is based on the full conditional distribution of 
$(\sigma^2 \Sigma_{k})^{-1}$, which we derive from the 
full conditional density of $\{\sigma,\Psi_k\}$:
%Since we need to obtain the distribution of $\left(\s/2igma^2\Sigma_k\right)^{-1} =\left(\sigma^2\Psi_k\Psi_k^T\right)^{-1}$, we will start with the full conditional distribution of $\left\{\sigma,\Psi_k\right\}$:
\begin{align*}
%  p&(\sigma,\Psi_k|\Psi_1,\ldots,\Psi_{k-1},\Psi_{k+1},\ldots,\Psi_n,X)\\
p(\sigma,\Psi_k)   \propto& |\sigma\Psi_k|^{-(np+1)/p_k} \exp \left\{- \tr \left((\sigma^2\Psi_k\Psi_k^T)^{-1}X_{(k)}\Psi_{-k}^{-T}\Psi_{-k}^{-1}X_{(k)}^T\right)/2\right\}\prod_{i=2}^{p_k}\Psi_{k[i,i]}^{i - 2}, 
\end{align*}
where dependence of the density on $\{\Psi_1,\ldots,\Psi_{k-1},\Psi_{k+1},\ldots,\Psi_K,X \}$ has been made implicit. 
Now set $L_k = \sigma \Psi_k$. 
%The following Lemma contains the Jacobian of this transformation, the proof of which can be found in the appendix.
The full conditional density  of $L_k$ can be obtained from that of 
$\{\sigma,\Psi_k\}$ and the Jacobian of the transformation. 
\begin{lemma}
  \label{lemma:det.to.lower}
 % Let $\sigma > 0$, $\Psi \in \mathcal{G}_{p_k}^+$. 
%Consider the transformation $g(\sigma,\Psi) = \sigma\Psi$, taking matrices from $\mathcal{G}_{p_k}^+$ to $G_{p_k}^+$. 
The Jacobian of the transformation $g(\sigma ,\Psi_k) = \sigma \Psi_k$, 
mapping $\mathbb R^+\times \mathcal G_{p_k}^+$ to $G_{p_k}^+$ is 
  \[
  J(\sigma,\Psi_k) \propto \sigma^{p_k(p_k+1)/2 - 1} \Psi_{k[1,1]}.
  \]
\end{lemma}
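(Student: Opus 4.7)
The plan is to compute the Jacobian determinant directly in explicit coordinates and then invoke the Schur complement formula. Using the identifiability constraint $\det \Psi_k = 1$, I parameterize $\Psi_k \in \mathcal{G}_{p_k}^+$ by $\{\Psi_{k[i,j]} : 2 \le i \le p_k,\, 1 \le j \le i\}$, with $\Psi_{k[1,1]} = \prod_{i=2}^{p_k} \Psi_{k[i,i]}^{-1}$, exactly as in the setup preceding Theorem \ref{theorem:haar.measure}. The target $L_k \in G_{p_k}^+$ is coordinatized by the full set $\{L_{k[i,j]} : 1 \le j \le i \le p_k\}$, and both sides have dimension $p_k(p_k+1)/2$, so the matrix of partial derivatives is square.

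Order the new coordinates as $(L_{k[1,1]}, \{L_{k[i,j]}\}_{(i,j) \neq (1,1)})$ and the old ones as $(\sigma, \{\Psi_{k[i,j]}\}_{(i,j) \neq (1,1)})$. For $(i,j) \neq (1,1)$ the unconstrained relation $L_{k[i,j]} = \sigma \Psi_{k[i,j]}$ gives $\partial L_{k[i,j]}/\partial \Psi_{k[i',j']} = \sigma\, \delta_{(i,j),(i',j')}$ and $\partial L_{k[i,j]}/\partial \sigma = \Psi_{k[i,j]}$. Combined with $\partial L_{k[1,1]}/\partial \sigma = \Psi_{k[1,1]}$ and the chain-rule calculation $\partial L_{k[1,1]}/\partial \Psi_{k[i,j]} = -\sigma\, \Psi_{k[1,1]}/\Psi_{k[i,i]}$ when $j = i \ge 2$ and $0$ otherwise, the Jacobian matrix takes the block form
\begin{equation*}
M = \begin{pmatrix} \Psi_{k[1,1]} & v^{T} \\ w & \sigma I_N \end{pmatrix}, \qquad N = \frac{p_k(p_k+1)}{2} - 1,
\end{equation*}
where $w$ is the column vector with entries $\Psi_{k[i,j]}$ and $v$ is supported on the diagonal positions with $v_{(i,i)} = -\sigma\, \Psi_{k[1,1]}/\Psi_{k[i,i]}$ for $2 \le i \le p_k$.

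The Schur complement identity then yields $\det(M) = \sigma^{N}\bigl(\Psi_{k[1,1]} - \sigma^{-1} v^{T} w\bigr)$. Only the diagonal positions of $v$ are nonzero, and each contributes $v_{(i,i)} w_{(i,i)} = -\sigma\, \Psi_{k[1,1]}$, so $v^{T} w = -(p_k - 1)\sigma\, \Psi_{k[1,1]}$ and therefore $\det(M) = p_k\, \sigma^{p_k(p_k+1)/2 - 1}\, \Psi_{k[1,1]}$, which is proportional to the stated expression. The only delicate point is bookkeeping in the first row of $M$: the determinant-one constraint couples $L_{k[1,1]}$ to \emph{every} diagonal $\Psi_{k[i,i]}$, and these contributions must be tracked through the chain rule. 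Once those off-diagonal entries of $v$ are correctly identified, the lower-right block is $\sigma I_N$ and the remaining computation is a single scalar evaluation.
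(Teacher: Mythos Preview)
Your proof is correct, and it even recovers the exact proportionality constant $p_k$. It takes a genuinely different route from the paper's argument, however.

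The paper proceeds by an invariance argument rather than a direct coordinate computation. It starts from the right invariant measure $dL/\prod_i L_{[i,i]}^{p_k-i+1}$ on $G_{p_k}^+$, writes the integral in both $L$- and $(\sigma,\Psi)$-coordinates, and uses right invariance under $(\sigma,\Psi)\mapsto (b\sigma,\Psi B)$ for $(b,B)\in \mathbb R^+\times\mathcal G_{p_k}^+$ together with the previously computed Jacobian for right multiplication on $\mathcal G_{p_k}^+$. This yields the functional equation $J(\sigma,\Psi)=b^{p_k(p_k+1)/2-1}B_{[1,1]}\,J(\sigma/b,\Psi B^{-1})$, and specializing $b=\sigma$, $B=\Psi$ gives the result up to the constant $J(1,I)$.

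Your direct Schur-complement calculation is more elementary and self-contained: it does not rely on the Haar-measure machinery or on the earlier Jacobian computations in the paper, and it delivers the exact constant. The paper's approach is more conceptual and would transfer immediately to other group actions once the relevant invariant measures are known; yours is cleaner for a reader who has not internalized the group-theoretic setup. Both are valid, and either would serve in the paper.
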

%The derivation is given in the appendix. 
Since $L_k = \sigma \Psi_k$, we have $\sigma = |L_k|^{1/p_k}$ and $\Psi_{k[i,i]} = L_{k[i,i]}/\sigma = L_{k[i,i]}/|L_k|^{1/p_k}$. Lemma \ref{lemma:det.to.lower} implies
\begin{align*}
%  p(L_k&|\Psi_1,\ldots,\Psi_{k-1},\Psi_{k+1},\ldots,\Psi_n,X)\\  
p(L_k)  & \propto |L_k^T|^{-(np+1)/p_k} \exp \left\{-\tr \left((L_kL_k^T)^{-1}X_{(k)}\Psi_{-k}^{-T}\Psi_{-k}^{-1}X_{(k)}^T\right)/2\right\}\\
  &\times \prod_{i=2}^{p_k}\left(L_{k[i,i]}/|L_k|^{1/p_k} \right)^{i - 2} \left(|L_k|^{1/p_k}\right)^{-p_k(p_k + 1)/2 + 1}\left(L_{k[1,1]}/|L_k|^{1/p_k}\right)^{-1},
\end{align*}
which, through straightforward calculations, can be shown to be proportional to
\begin{align*}
  \left(\prod_{i=1}^{p_k}L_{k[i,i]}^{i - np/p_k - p_k - 1}\right)\exp \left\{- \tr \left((L_kL_k^T)^{-1}X_{(k)}\Psi_{-k}^{-T}\Psi_{-k}^{-1}X_{(k)}^T\right)/2\right\}.
\end{align*}

We  now ``absorb'' $X_{(k)}\Psi_{-k}^{-T}\Psi_{-k}^{-1}X_{(k)}^T$ into $L_k$. First, take the lower triangular Cholesky decomposition of $X_{(k)}\Psi_{-k}^{-T}\Psi_{-k}^{-1}X_{(k)}^T = \Phi_k \Phi_k^T$ so that 
\[\left(X_{(k)}\Psi_{-k}^{-T}\Psi_{-k}^{-1}X_{(k)}^T\right)^{-1} = \Phi_k^{-T} \Phi_k^{-1}.\] We have
\begin{align*}
  p(L_k) \propto& \left(\prod_{i=1}^{p_k}L_{k[i,i]}^{i - np/p_k - p_k - 1}\right)\exp \left\{- \tr \left((L_kL_k^T)^{-1}\Phi_k\Phi_k^T\right)/2\right\} \\
  \propto& \left(\prod_{i=1}^{p_k}L_{k[i,i]}^{i - np/p_k - p_k - 1}\right)\exp \left\{- \tr \left((\Phi_k^{-1}L_k(\Phi_k^{-1}L_k)^T)^{-1}\right)/2\right\}.
\end{align*}
Now let $W_k = \Phi_k^{-1}L_k$, so that $L_k = \Phi_kW_k$. This change of variables has Jacobian $J(W_k) = \prod_{i = 1}^{p_k} \Phi_{k[i,i]}^{i}$ \citep[Proposition 5.13]{eaton1983multivariate}, so that
\begin{align}
  \label{equation:w.marginal}
  p(W_k) \propto \left(\prod_{i=1}^{p_k}W_{k[i,i]}^{i - np/p_k - p_k - 1}\right)\exp \left\{- \tr \left((W_kW_k^T)^{-1}\right)/2\right\}.
\end{align}
Note that the distribution of $W_k$ does not depend on $\Psi_{-k}$. 
Now 
compare equation (\ref{equation:w.marginal}) to the density of the lower triangular Cholesky square root $W$ of an inverse-Wishart distributed random matrix \[WW^T \sim \mbox{inverse-Wishart}_{p_k}\left(np/p_k,I_{p_k}\right),\]  given by
\begin{align}\
\label{equation:chol.inv.wish}
  p(W) \propto \left(\prod_{i = 1}^{p_k} W_{[i,i]}^{-np/p_k - i}\right) \exp\left\{ - \tr((WW^T)^{-1}) /2\right\}.
\end{align}
The conditional densities of the off-diagonal elements of $W_k$ and $W$ given the diagonal elements 
% in (\ref{equation:w.marginal}) and (\ref{equation:chol.inv.wish}) given the diagonal elements  
 clearly have the same form. The diagonal elements of $W_k$ and $W$ in (\ref{equation:w.marginal}) and (\ref{equation:chol.inv.wish}) turn out to be square roots of inverse-gamma distributed random variables, but with different shape parameters. To show this, we first derive the conditional densities of the
off-diagonal elements of $W$:
% $W_{[i,j]}$'s of (\ref{equation:chol.inv.wish}):
\begin{lemma} (Bartlett's decomposition for the inverse-Wishart)
  \label{lemma:bartlett.inv.wishart} 
Let $W$ be the lower triangular Cholesky square root of an inverse-Wishart distributed matrix, so 
$WW^T\sim \mbox{inverse-Wishart}_{p_k}(\nu,I_{p_k})$.
Then for each $i=1,\ldots,p_k$, 
  \begin{align*} 
&     W_{[i,i]}^2  \sim \mbox{ inverse-gamma} ( [\nu - p_k + i]/2, 1/2 ), \mbox{ and}\\
&     W_{[i,1:(i-1)]}|W_{[i,i]}, W_{[1:(i-1),1:(i-1)]}\\
&     \sim N_{i-1}\left( 0, W_{[i,i]}^2W_{[1:(i-1),1:(i-1)]}^TW_{[1:(i-1),1:(i-1)]} \right).
  \end{align*}
\end{lemma}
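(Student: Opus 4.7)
My plan is to prove the lemma by induction on $p_k$, peeling off the last row of $W$ at each step and working directly from the density in (\ref{equation:chol.inv.wish}). Partition
\[ W = \begin{pmatrix} L & 0 \\ r & w \end{pmatrix}, \]
where $L = W_{[1:(p_k-1),1:(p_k-1)]}$, $r = W_{[p_k,1:(p_k-1)]}$ is a row vector, and $w = W_{[p_k,p_k]}$. The block-triangular inverse formula gives
\[ W^{-1} = \begin{pmatrix} L^{-1} & 0 \\ -w^{-1} r L^{-1} & w^{-1} \end{pmatrix}, \]
so $\tr((WW^T)^{-1}) = \|W^{-1}\|_F^2 = \|L^{-1}\|_F^2 + w^{-2}\,r(L^TL)^{-1}r^T + w^{-2}$, using the identity $L^{-1}L^{-T}=(L^TL)^{-1}$. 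Also $\prod_{i=1}^{p_k}W_{[i,i]}^{-\nu-i} = w^{-\nu-p_k}\prod_{i=1}^{p_k-1}L_{[i,i]}^{-\nu-i}$.

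Substituting into (\ref{equation:chol.inv.wish}), the joint density factors as
\[ p(W) \propto \bigl[w^{-\nu-p_k}e^{-1/(2w^2)}\bigr]\Bigl[\prod_{i=1}^{p_k-1}L_{[i,i]}^{-\nu-i}\,e^{-\|L^{-1}\|_F^2/2}\Bigr]\bigl[e^{-r(L^TL)^{-1}r^T/(2w^2)}\bigr]. \]
The third factor, for fixed $(L,w)$, is a Gaussian kernel in $r^T$ with covariance $w^2L^TL$, which already gives the off-diagonal claim for $i=p_k$. Integrating $r$ out contributes a normalizing constant proportional to $w^{p_k-1}\prod_{i=1}^{p_k-1}L_{[i,i]}$ (from $|w^2L^TL|^{1/2}$), after which the marginal density separates into a $w$-part $w^{-\nu-1}e^{-1/(2w^2)}$ and an $L$-part $\prod_{i=1}^{p_k-1}L_{[i,i]}^{-(\nu-1)-i}e^{-\|L^{-1}\|_F^2/2}$. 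A change of variables $y=w^2$ shows the $w$-part is the law of the square root of an inverse-gamma$(\nu/2,1/2)$ variate, matching the claim for $i=p_k$ since $\nu-p_k+p_k=\nu$; and the $L$-part is exactly the density (\ref{equation:chol.inv.wish}) in dimension $p_k-1$ with $\nu$ replaced by $\nu-1$. Applying the inductive hypothesis to $L$ then yields the stated distributions for $W_{[i,i]}$ and $W_{[i,1:(i-1)]}$ for all $i<p_k$, with the inverse-gamma shape parameter correctly becoming $(\nu-1)-(p_k-1)+i=\nu-p_k+i$.

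The main obstacle is keeping track of the precise form of the conditional covariance: the Frobenius-norm expansion produces $(L^TL)^{-1}$, so the covariance is $w^2L^TL$ (as the lemma asserts) rather than $w^2LL^T$, which is easy to confuse. Everything else is mechanical block-matrix algebra, and the base case $p_k=1$ is immediate from $p(w)\propto w^{-\nu-1}e^{-1/(2w^2)}$.
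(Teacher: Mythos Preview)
Your proof is correct. Both you and the paper argue by induction on the dimension, peeling off the last row and column of $W$ at each step, but the two arguments take genuinely different routes to the inductive step.

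The paper works distributionally: it first records the standard block-conditional properties of the Wishart (via Proposition 8.7 of \cite{eaton1983multivariate}), inverts them through the Schur complement to obtain the distributions of $S_{11}$, $S_{22\bullet 1}$, and $S_{21}$ for $S\sim\text{inverse-Wishart}$, and then reads off the Cholesky components $W_{[n,n]}^2=s_{22\bullet 1}$ and $W_{[n,1:(n-1)]}=S_{21}W_1^{-T}$ from these. Your approach is more self-contained and density-based: you start directly from the Cholesky-factor density (\ref{equation:chol.inv.wish}), compute $\tr((WW^T)^{-1})=\|W^{-1}\|_F^2$ via the block inverse, and observe that the density factors into a Gaussian kernel in $r$, an inverse-gamma kernel in $w^2$, and a lower-dimensional copy of (\ref{equation:chol.inv.wish}) in $L$ with $\nu$ replaced by $\nu-1$. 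What your approach buys is that it avoids any external reference to Wishart block-conditional results and makes the independence structure between $w$, $L$, and $r\mid (w,L)$ completely explicit from the factorization; what the paper's approach buys is that it stays at the level of named distributions throughout, so each step is a one-line identification rather than a calculation.
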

Here, $W_{[ 1:(i-1),1:(i-1)]}$ denotes the submatrix of $W$ made up of the 
first $(i-1)$ rows and columns, and $W_{[i,1:(i-1)]}$ is the vector made up 
of the first $(i-1)$ elements of the $i$th row.
%A derivation is given in the appendix. 

By Lemma \ref{lemma:bartlett.inv.wishart}, if 
$WW^T\sim \text{inverse-Wishart}(np/p_k,I_{p_k}  )$ then 
%(\ref{equation:chol.inv.wish}) results in 
the squared diagonal elements of $W$  are independent $\mbox{inverse-gamma} ( (n p/p_k - p_k + i)/2, 1/2 )$  random variables. 
%after integrating out the off-diagonals. 
%That is, we know the integral
This tells us that 
\begin{align*}
%\int_{\mathbb{R} \times \cdots \times \mathbb{R}}
\int  \exp\left\{ - \tr((WW^T)^{-1}) /2 \right\} 
%dW_{[2,1]}dW_{[3,1]} \cdots dW_{[p_k,p_k-1]}\\
\prod_{i>j} dW_{[i,j]} 
 \ \propto \  \prod_{i = 1}^{p_k} W_{[i,i]}^{p_k - 1} \exp\left\{-1/(2W_{[i,i]}^2)\right\}.
\end{align*}
This result allows us to integrate
(\ref{equation:w.marginal}) with respect to the off-diagonal elements of $W_k$, giving
\begin{align*}
&\int\left(\prod_{i=1}^{p_k}W_{k[i,i]}^{i - np/p_k - p_k - 1}\right)\exp \left\{-\tr \left((W_kW_k^T)^{-1}\right)/2\right\}
%dW_{k[2,1]}\cdots dW_{k[p_k,p_k-1]}\\ 
\prod_{i>j} dW_{[i,j]}  \\
& \propto W_{k[i,i]}^{i - np/p_k  - 2} \exp\left\{-1/(2W_{k[i,i]}^2)\right\}. 
\end{align*}
A  change of variables implies that the $W_{k[i,i]}^2$'s are independent, and
\begin{align}
  W_{k[i,i]}^2 \sim \mbox{ inverse-gamma}( [ np/p_k - i + 1]/2, 1/2 ). 
\label{eq:wkdiag}
\end{align}
This completes the characterization of the distribution of $W_k$: 
The distribution of the diagonal elements is given by 
(\ref{eq:wkdiag}) and the conditional distribution of the 
off-diagonal elements given the diagonal can be obtained from 
Lemma \ref{lemma:bartlett.inv.wishart}. 
%The following Lemma will reveal the relationship between $W_k$ and the Wishart distribution.
Finally, this distribution can be related to a Wishart distribution via 
the following lemma:
\begin{lemma}
  \label{lemma:lt.wishart.inverse}
  Let $W_k$ be a random $p_k\times p_k$ lower triangular matrix such that
  \begin{align*}
    &W_{k[i,i]}^2 \sim \mbox{ inverse-gamma} \left( [\nu - i + 1]/2, 1/2 \right), \mbox{ and}\\
     &W_{k[i,1:(i-1)]}|W_{k[1:(i-1),1:(i-1)]},W_{k[i,i]} \\
& \sim N_{i-1}\left( 0, W_{k[i,i]}^2W_{k[1:(i-1),1:(i-1)]}^TW_{k[1:(i-1),1:(i-1)]} \right).
  \end{align*}
  Then the elements of $V_k = W_k^{-1}$ are distributed independently as 
  \begin{align*} 
     & V_{k[i,i]}^2 \sim \text{gamma} ([\nu - i + 1]/2, 1/2 ) , \ i=1,\ldots, q  \\ 
   &  V_{k[i,j]} \sim N(0,1),  \ i\neq j. 
  \end{align*}
\end{lemma}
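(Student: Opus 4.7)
The plan is to show directly that $V_k = W_k^{-1}$ inherits the desired independent structure via block-matrix inversion, conditioning on the preceding rows of $W_k$ one row at a time. Since the inverse of a lower triangular matrix is lower triangular, only entries with $i \geq j$ are nonzero in $V_k$, and the diagonal relation $V_{k[i,i]} = 1/W_{k[i,i]}$ holds automatically.

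For the diagonal, a one-dimensional change of variables converts the inverse-gamma law of $W_{k[i,i]}^2$ into $V_{k[i,i]}^2 \sim \text{gamma}([\nu-i+1]/2, 1/2)$, and independence of the $W_{k[i,i]}^2$'s across $i$ (implicit in the Bartlett-type hypothesis, exactly as in Lemma \ref{lemma:bartlett.inv.wishart}) transfers to independence of the $V_{k[i,i]}^2$'s.

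For the off-diagonals I would proceed inductively on $i$. Partitioning the leading $i \times i$ block of $W_k$ and applying the block-inverse formula for lower-triangular matrices gives
\begin{align*}
V_{k[i,1:(i-1)]} = -W_{k[i,i]}^{-1}\, W_{k[i,1:(i-1)]}\, W_{k[1:(i-1),1:(i-1)]}^{-1}.
\end{align*}
Conditionally on $W_{k[1:(i-1),1:(i-1)]}$ and $W_{k[i,i]}$, this is a linear transformation of the Gaussian vector $W_{k[i,1:(i-1)]}$, hence Gaussian with mean zero. Inserting the hypothesized covariance $W_{k[i,i]}^2\, W_{k[1:(i-1),1:(i-1)]}^T W_{k[1:(i-1),1:(i-1)]}$ into the sandwich formula, the $W_{k[i,i]}^2$ factors cancel against the $W_{k[i,i]}^{-2}$ prefactor and the remaining factors telescope through $(W_{k[1:(i-1),1:(i-1)]}^T)^{-1} W_{k[1:(i-1),1:(i-1)]}^T = I$ and $W_{k[1:(i-1),1:(i-1)]} W_{k[1:(i-1),1:(i-1)]}^{-1} = I$ to leave covariance $I_{i-1}$.

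Because this conditional distribution is free of the conditioning variables, $V_{k[i,1:(i-1)]} \sim N_{i-1}(0, I_{i-1})$ unconditionally and is independent of $W_{k[1:(i-1),1:(i-1)]}$ (equivalently $V_{k[1:(i-1),1:(i-1)]}$) and of $V_{k[i,i]}$. Iterating over $i = 2, \ldots, p_k$ shows that all off-diagonal entries of $V_k$ are i.i.d.\ standard normal and are jointly independent of the diagonal, giving the lemma. The only step requiring care is the covariance cancellation above, but it is a routine algebraic identity once the block-inversion formula is written down, so no serious obstacle is expected.
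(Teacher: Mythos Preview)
Your proposal is correct and follows essentially the same route as the paper's proof: both use the block-inverse formula for lower-triangular matrices to obtain $V_{k[i,1:(i-1)]} = -W_{k[i,i]}^{-1} W_{k[i,1:(i-1)]} W_{k[1:(i-1),1:(i-1)]}^{-1}$, verify the covariance collapses to $I_{i-1}$ so the conditional law is free of the conditioning variables, and handle the diagonal via the reciprocal inverse-gamma/gamma relation. The paper phrases the argument as induction on the leading $n\times n$ block, which is exactly your iteration over $i$.
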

%A derivation is in the appendix. 
Note that the matrix $V_k$ is distributed as the lower triangular Cholesky square root of a Wishart distributed random matrix. 
Applying the lemma to $W_k$, for which $\nu = np/p_k$, we have 
that 
$V_k = W_k^{-1} = (\Phi_k^{-1}L_k )^{-1} = L_k^{-1}\Phi_k = \frac{1}{\sigma}\Psi_k^{-1}\Phi_k$ is equal in distribution to the lower triangular Cholesky square root of a $\mbox{Wishart}_{p_k}(np/p_k,I_{p_k})$ random matrix. That is, the precision matrix $(\sigma^2 \Psi_k \Psi_k^T )^{-1} = \Psi_k^{-T}\Psi_k^{-1}/\sigma^2$ is conditionally distributed as
\begin{align*}
  &\frac{1}{\sigma^2}\Psi_k^{-T}\Psi_k^{-1}|\Psi_{-k} \overset{d}{=} \Phi_k^{-T} V^TV \Phi_k^{-1}, \mbox{ where}\\
  & VV^T \sim \mbox{Wishart}_{p_k}\left(np/p_k,I_{p_k}\right) \mbox{ and } \Phi_k\Phi_k^T = X_{(k)}\Psi_{-k}^{-T}\Psi_{-k}^{-1}X_{(k)}^T.
\end{align*}
We say the matrix, $\Phi_k^{-T} V^TV \Phi_k^{-1}$ has a \emph{mirror-Wishart} distribution because $\Phi_k^{-T} VV^T \Phi_k^{-1}$ would have a Wishart distribution. 
This completes the derivation of the full conditional distribution of 
$\sigma^2 \Sigma_k = \sigma^2 \Psi_k\Psi_k^T$.

Although not necessary for posterior approximation, the full conditional distribution of $\sigma$ given $\Psi_1,\ldots,\Psi_K$ and $X$ is easy to derive. The posterior density is
\[
p(\sigma) \propto \sigma^{-(np+1)}\exp\left\{-\left|\left|X \times \{ \Psi_1^{-1},\ldots,\Psi_K^{-1},I_n \} \right|\right|^2/(2\sigma^2) \right\}.
\]
Letting $\gamma = 1/\sigma^2$, we have
\[
p(\gamma) \propto \gamma^{np/2 - 1}\exp\left\{-\gamma\left|\left|X \times \{ \Psi_1^{-1},\ldots,\Psi_K^{-1},I_n \} \right|\right|^2/2 \right\},
\]
and so the full conditional distribution of $1/\sigma^2$ is \[\mbox{gamma}(np/2,||X \times \{ \Psi_1^{-1},\ldots,\Psi_K^{-1},I_n \} ||^2/2).\]

\section{Estimation under multiway Stein's loss}
\subsection{The UMREE for multiway Stein's loss}
\label{section:mree}
% In this section, we introduce an invariant loss and derive the UMREE under $\mathcal{G}_{\mathbf{p}}^+$. For the decision problem to remain invariant, the loss function must be a function of $(b/\sigma, \Psi_1^{-1}B_1,\ldots,\Psi_K^{-1}B_K)$ for $b,B_1,\ldots,B_K$ in our action space. Set $A_k = B_kB_k^T$ for $k = 1,\ldots,K$. The following loss, which we call multiway Stein's loss, is invariant and tenable to calculation.

A commonly used loss function for estimation  of a covariance matrix $\Sigma$ is Stein's loss, 
 \begin{align*}
L_S\left(S,\Sigma\right) = \tr\left(S\Sigma^{-1}\right) - \log\left|S\Sigma^{-1}\right| - p   ,  \  \ \Sigma, S \in \mathcal S_{p}^+.
\end{align*}
First introduced by \cite{james1961estimation}, Stein's loss has been proposed as a reasonable and perhaps better alternative to quadratic loss for 
evaluating performance of covariance estimators. 
%Motivations and work on Stein's loss can be found in \cite{brown1968inadmissibility}, \cite{dey1985estimation}, and \cite{brown1990developments}. 
For example, Stein's loss, unlike  quadratic loss, does not  penalize overestimation of the variances  more severely than underestimation. 
%\begin{align*}
%L_S\left(S_0,\Sigma_0\right) = \tr\left(S_0\Sigma_0^{-1}\right) - \log\left|S_0\Sigma_0^{-1}\right| - p.
%\end{align*}
%If $S_0$ and $\Sigma_0$ are separable, that is $\Sigma_0 = \sigma^2\left(\Sigma_K \otimes \cdots \otimes \Sigma_1\right)$ and $S_0 = s^2\left(S_K \otimes \cdots \otimes S_1\right)$, then Stein's loss is, after straightforward calculations,
%\begin{align*}
%L_S\left(S,\Sigma\right) = \frac{s^2}{\sigma^2}\prod_{k = 1}^{K}\tr\left(S_k\Sigma_k^{-1}\right) - p \log\left(\frac{s^2}{\sigma^2}\right) - p,
%\end{align*}
%where $S = \left(s^2,S_1,\ldots,S_K\right)$ and $\Sigma = \left(\sigma^2,\Sigma_1,\ldots,\Sigma_K\right)$. Stein chose this loss, however, ``largely because it is comparatively easy to work with'' \citep{stein1986lectures}. 
%In the same vein, we provide a generalization of Stein's loss to the array case that is more tenable to calculation in the estimation of separable covariance matrices. 

Recall from Section \ref{section:invariant.measure} that the array normal model can be parameterized in terms  of $\Sigma = (\sigma^2,\Sigma_1,\ldots, \Sigma_K) \in \mathcal S_{\bf p}^+$, 
where $|\Sigma_k|=1$ for each $k=1,\ldots, K$. 
For estimation 
of the covariance parameters  $\Sigma\in \mathcal S^{+}_{\bf p}$, 
we consider the following generalization of Stein's loss, which we call 
``multiway Stein's loss'':
\begin{align}
\begin{split}
  \label{equation:multi.stein.loss}
  L_M&\left(\Sigma,S\right) = \frac{s^2}{\sigma^2} \sum_{k=1}^{K} \frac{p}{p_k} \tr\left[ S_k\Sigma_k^{-1}\right] - Kp\log\left( \frac{s^2}{\sigma^2} \right) - Kp    ,  \  \ \Sigma, S \in \mathcal S_{\bf p}^+. 
\end{split}
\end{align}
%For $K = 1$, multiway Stein's Loss becomes Stein's Loss. To see this, note that for $K = 1$, then $p = p_1$ and $s^2 = |s^2S_1|^{1/p_1}$. This gives
%\begin{align*}
%L_1\left(\left(\sigma^2,\Sigma_1\right),\left(s^2,S_1\right)\right) &=\frac{s^2}{\sigma^2} \sum_{k=1}^{1} \frac{p}{p_k} \tr\left[ S_k \Sigma_k^{-1}\right] - p\log\left( \frac{s^2}{\sigma^2} \right) - p\\
%  &= \frac{p}{p_1} \tr\left[ s^2S_1 \left(\sigma^2\Sigma_1\right)^{-1}\right] - p\log\left( \frac{|s^2S_1|^{1/p_1}}{|\sigma^2\Sigma_1|^{1/p_1}} \right) - p\\
%  &= \tr\left[ s^2S_1 \left(\sigma^2\Sigma_1\right)^{-1}\right] - \log \left( |s^2S_1 \left( \sigma^2\Sigma_1 \right)^{-1}| \right) - p,
%\end{align*}
%which is exactly Stein's Loss for the one-mode case. Multiway Stein's loss can thus be seen as a generalization of Stein's loss to the separable case.
%%% Q: Is there an argument for not using the "usual" steins loss for 
%%% sitmation of sep cov?
It is easy 
to see that for $K=1$, multiway Stein's loss reduces to Stein's loss. 
Multiway Stein's loss also has the attractive property of being invariant under multilinear transformations. To see this, define $SL_{\bl p }$  
to be the set of lists of the form $A= (a,A_1,\ldots, A_K)$ for which $a>0$  and $A_k \in SL_{p_k}$ for each $k$, with
$SL_{p_k}$ being the special linear group of 
$p_k\times p_k$ matrices with unit determinant. 
For two elements $A$ and $B$ of $SL_{\bl p}$, define 
$AB = (ab, A_1 B_1,\ldots, A_KB_K)$ and 
$A^T = (a, A_1^T,\ldots, A_K^T)$. 
%Also note that 
%$\mathcal S_{\bl p} \subset SL_{\bl p}$ and $\mathcal G_{\bl p}\subset SL_{\bl p}$. 
Multiway Stein's loss is invariant under transformations
of the form $\Sigma \rightarrow  A \Sigma A^T$, as 
\begin{align*}
\begin{split}
  &L_M\left(A\Sigma A^T,A S A^T\right)\\
  &= \frac{a^2 s^2}{a^2\sigma^2} \sum_{k=1}^{K} \frac{p}{p_k} \tr\left[ A_kS_k  A_k^T \left(A_k\Sigma_k A_k^T\right)^{-1}\right] - Kp\log\left( \frac{a^2 s^2}{a^2 \sigma^2} \right) - Kp\\
  &= \frac{s^2}{\sigma^2} \sum_{k=1}^{K} \frac{p}{p_k} \tr\left[ S_k\Sigma_k^{-1}\right] - Kp\log\left( \frac{s^2}{\sigma^2} \right) - Kp\\
  &= L_M\left(\Sigma,S\right).
\end{split}
\end{align*}
In particular, 
(\ref{equation:multi.stein.loss}) is invariant under $\mathcal{G}_{\bf p}^+$, 
as $\mathcal G_{\bl p}^+\subset SL_{\bl p}$. 
Therefore, the best $\mathcal G_{\bf p}^+$-equivariant 
estimator under multiway Stein's loss can be obtained using 
Corollary \ref{cor:umree}.

\begin{proposition}{(UMREE under multiway Stein's loss)}
  \label{prop:mree}
  Let \[\mathcal{E}_k = \left( E\left[\left. \left(\sigma^2\Sigma_k\right)^{-1}\right|X\right]\right)^{-1},\] where the expectation is with respect to the posterior distribution given by  Equation \ref{equation:equi.posterior}. The minimizer of the posterior expectation
  \begin{align*}
    E\left[\left.\frac{s^2}{\sigma^2} \sum_{k=1}^{K} \frac{p}{p_k} \tr\left[ S_k^T \Sigma_k^{-1}\right] - Kp\log\left( \frac{s^2}{\sigma^2} \right) - Kp \right|X\right]
  \end{align*}
  with respect to $s$ and the $S_k$'s is
  \begin{align*}
    \hat{\Sigma}_k &= \mathcal{E}_k / |\mathcal{E}_k|^{1/(p_k)}\\
    \hat{\sigma}^2 &= \left(\sum_{k=1}^{K}\frac{1}{K} |\mathcal{E}_k|^{-1/p_k}\right)^{-1}.
  \end{align*}
\end{proposition}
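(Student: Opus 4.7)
The plan is to reduce the posterior risk to an explicit deterministic function of $(s,S_1,\ldots,S_K)$, then minimize it sequentially: first over each $S_k$ subject to the unit determinant constraint, then over $s$ by one-dimensional calculus.

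First I would push the expectation inside the loss using linearity. The only posterior-dependent quantities in $L_M$ are the trace terms $\tr[S_k\Sigma_k^{-1}]/\sigma^2 = \tr[S_k(\sigma^2\Sigma_k)^{-1}]$ and the additive constant $Kp E[\log\sigma^2|X]$, which does not affect the minimizer. Since $E[(\sigma^2\Sigma_k)^{-1}|X] = \mathcal{E}_k^{-1}$ by definition, the objective becomes
\begin{align*}
F(s,S_1,\ldots,S_K) = s^2\sum_{k=1}^K \frac{p}{p_k}\tr[S_k\mathcal{E}_k^{-1}] - Kp\log s^2 + C,
\end{align*}
where $C$ collects terms free of the decision variables.

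For fixed $s>0$, the minimization in $S_k$ decouples across $k$, so for each $k$ I would solve the constrained program: minimize $\tr[S_k\mathcal{E}_k^{-1}]$ over $S_k\in\mathcal{S}_{p_k}^+$ with $|S_k|=1$. Using a Lagrange multiplier for the determinant constraint and the identity $\partial\log|S_k|/\partial S_k = S_k^{-1}$, the first-order condition is $\mathcal{E}_k^{-1} = \lambda S_k^{-1}$, i.e., $S_k \propto \mathcal{E}_k$. Imposing $|S_k|=1$ pins down the scalar, yielding $\hat{S}_k = \mathcal{E}_k/|\mathcal{E}_k|^{1/p_k}$. Convexity of the objective over the manifold $\{S_k : |S_k|=1\}$ (the trace term is linear while $-\log|S_k|$ is convex in $S_k\succ 0$, and we are effectively minimizing a linear function on a log-determinant level set) guarantees this critical point is the global minimum. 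Substituting back gives $\tr[\hat{S}_k\mathcal{E}_k^{-1}] = p_k|\mathcal{E}_k|^{-1/p_k}$, so
\begin{align*}
F(s,\hat{S}_1,\ldots,\hat{S}_K) = s^2 \, p\sum_{k=1}^K|\mathcal{E}_k|^{-1/p_k} - Kp\log s^2 + C.
\end{align*}

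Finally, setting $t=s^2$ and differentiating in $t$ yields $p\sum_k|\mathcal{E}_k|^{-1/p_k} - Kp/t = 0$, hence $\hat{s}^2 = K/\sum_k|\mathcal{E}_k|^{-1/p_k} = \bigl(\sum_k \tfrac{1}{K}|\mathcal{E}_k|^{-1/p_k}\bigr)^{-1}$; the second derivative $Kp/t^2>0$ confirms this is the minimum. This matches the claimed form of $\hat{\sigma}^2$. The only real obstacle is the constrained $S_k$ step, and even that is standard once one recognizes that a linear functional on the determinant-one slice of $\mathcal{S}_{p_k}^+$ attains its minimum at a scalar multiple of the inverse of the linear functional's representing matrix; everything else is bookkeeping and one-variable calculus.
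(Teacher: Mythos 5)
Your proposal is correct, and it reaches the result by a more direct route than the paper. Both start from the same reduction: by linearity, the posterior risk becomes $s^2\sum_k \frac{p}{p_k}\tr(S_k\mathcal{E}_k^{-1}) - Kp\log s^2$ plus terms free of the decision variables. From there the paper absorbs the scale into each block ($\tilde S_k = s^2 S_k$, so $s^2=|\tilde S_k|^{1/p_k}$), performs a blockwise minimization with a coupling term $\lambda^{*}|\tilde S_k|^{1/p_k}$, diagonalizes $\Omega=\mathcal{E}_k^{-1/2}\tilde S_k\mathcal{E}_k^{-1/2}$, and shows via first- and second-derivative tests on the eigenvalues that all eigenvalues are equal, which forces $\tilde S_k\propto\mathcal{E}_k$; it then optimizes $s$. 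You instead keep the determinant-one constraint explicit, observe that for fixed $s>0$ the problem decouples across $k$ into the constrained programs $\min\{\tr(S_k\mathcal{E}_k^{-1}): S_k\succ 0,\ |S_k|=1\}$, solve each by the standard fact that the minimizer is $\mathcal{E}_k/|\mathcal{E}_k|^{1/p_k}$ with value $p_k|\mathcal{E}_k|^{-1/p_k}$, and finish with one-variable calculus in $t=s^2$; since the optimal $S_k$ does not depend on $s$, the profile argument gives the joint global minimizer. This avoids the paper's eigenvalue analysis entirely. The one place to tighten is your global-optimality justification for the $S_k$ step: the set $\{S_k: |S_k|=1\}$ is not convex, so ``convexity over the manifold'' does not by itself upgrade the Lagrange critical point to a global minimum. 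The clean fixes are either the matrix AM--GM inequality, $\tr(S_k\mathcal{E}_k^{-1})\ge p_k\,|S_k\mathcal{E}_k^{-1}|^{1/p_k} = p_k|\mathcal{E}_k|^{-1/p_k}$ with equality iff $S_k\mathcal{E}_k^{-1}$ is a multiple of the identity, or noting that the superlevel set $\{|S_k|\ge 1\}$ is convex, that scaling down any $S_k$ with $|S_k|>1$ strictly decreases the objective, and that the objective is coercive on the determinant-one slice (eigenvalues degenerating with fixed determinant drive the trace to infinity since $\mathcal{E}_k^{-1}\succ 0$), so the constrained minimum exists and is characterized by the first-order condition. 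With that repair, your argument is complete and in fact shorter than the paper's.
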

%\begin{proof}
%See appendix.
%\end{proof}
%A proof is in the appendix. 
The posterior expectation $E[(\sigma^2\Sigma_k)^{-1}|X]$ may be approximated by the Gibbs sampler of Section \ref{section:equi.proc}. That is, if  
$(\sigma^2 \Sigma_k)^{(1)},\ldots, \allowbreak 
 (\sigma^2 \Sigma_k)^{(T)}$ is a long sequence of values of 
$(\sigma^2 \Sigma_k)$ simulated  from the Gibbs sampler, then 
%$\left(\sigma_1^2,\Sigma_k^{(1)}\right),\ldots,\left(\sigma_T^2,\Sigma_k^{(T)}\right)$ 
% are 
%from our posterior draws, 
\[
{\rm E}[(\sigma^2\Sigma_k)^{-1}|X] \approx \sum_{t=1}^T [(\sigma^2\Sigma_k)^{(t)}]^{-1}/T.
\]

%An attractive property of the UMREE is its robustness to the estimation of covariance parameters under different weights for the covariance terms in the loss function, as seen in the following proposition.

The form of multiway Stein's loss (\ref{equation:multi.stein.loss}) 
includes a weighted sum of ${\rm tr}(S_k \Sigma_k^{-1})$, $k=1,\ldots, K$. 
We note that equivariant estimation of $\Sigma$
is largely unaffected by changes to the weights in this sum:
\begin{proposition}
Define weighted multiway Stein's loss as
\begin{align*}
\begin{split}
  &L_W\left(\Sigma,S\right) = \frac{s^2}{\sigma^2} \sum_{k=1}^{K} \frac{w_k}{p_k} \tr\left[ S_k\Sigma_k^{-1}\right] - \left(\sum_{k = 1}^{K}w_k\right)\log\left( \frac{s^2}{\sigma^2} \right) - \sum_{k=1}^{K}w_k,
\end{split}
\end{align*}
for known $w_k > 0$, $k = 1,\ldots,K$. Then the UMREE under $L_W$
% for $\Sigma_k$ is: 
is given by
\begin{align*}
    \hat{\Sigma}_k &= \mathcal{E}_k / |\mathcal{E}_k|^{1/(p_k)}\\
    \hat{\sigma}^2 &= \left(\sum_{k=1}^{K} \frac{w_k}{\sum_{i = 1}^{K}w_i}|\mathcal{E}_k|^{-1/p_k} \right)^{-1}. 
\end{align*}
\end{proposition}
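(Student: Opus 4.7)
The plan is to reduce this to a posterior-risk minimization via Corollary \ref{cor:umree} and then solve the resulting constrained optimization explicitly. The first thing to check is that $L_W$ is invariant under the group action. Substituting $\Sigma \mapsto A\Sigma A^T$ and $S \mapsto A S A^T$ for $A = (a,A_1,\ldots,A_K) \in \mathcal{G}_{\mathbf{p}}^+$, the factor $a^2 s^2 / (a^2 \sigma^2)$ cancels, $\tr[A_k S_k A_k^T (A_k \Sigma_k A_k^T)^{-1}] = \tr[S_k \Sigma_k^{-1}]$ by the cyclic property, and the $\log$ term is unchanged; this is the same computation that worked for $L_M$. So $L_W$ is $\mathcal{G}_{\mathbf{p}}^+$-invariant, and Corollary \ref{cor:umree} applies.

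Next I would write the posterior expected loss. Letting $M_k = E[(\sigma^2 \Sigma_k)^{-1} | X] = \mathcal{E}_k^{-1}$ and $W = \sum_k w_k$, we have
\begin{align*}
E[L_W | X] = s^2 \sum_{k=1}^K \frac{w_k}{p_k} \tr[S_k M_k] - W \log s^2 + W\, E[\log \sigma^2 | X] - W,
\end{align*}
where the last two terms do not depend on $(s, S_1,\ldots,S_K)$. The minimization therefore splits: for each $k$ separately, minimize $\tr[S_k M_k]$ over $S_k \in \mathcal{S}_{p_k}^+$ with $|S_k| = 1$; then optimize over $s > 0$.

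For the inner minimization, factor $M_k = N_k^T N_k$ and set $T = N_k S_k N_k^T$, so $\tr[S_k M_k] = \tr T$ and $|T| = |N_k|^2 |S_k| = |M_k|$. By the AM-GM inequality on the eigenvalues of $T$, $\tr T \ge p_k |T|^{1/p_k} = p_k |M_k|^{1/p_k}$, with equality iff $T$ is a scalar multiple of $I_{p_k}$. Solving $N_k S_k N_k^T = |M_k|^{1/p_k} I_{p_k}$ gives $S_k = |M_k|^{1/p_k} M_k^{-1} = |\mathcal{E}_k|^{-1/p_k} \mathcal{E}_k = \mathcal{E}_k / |\mathcal{E}_k|^{1/p_k}$, matching the claimed form and satisfying $|S_k| = 1$. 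Substituting the minimal value $p_k |\mathcal{E}_k|^{-1/p_k}$ back, the objective in $s$ becomes $s^2 C - W \log s^2 + \text{const}$ with $C = \sum_k w_k |\mathcal{E}_k|^{-1/p_k}$; differentiating in $s^2$ yields $\hat{s}^2 = W/C$, which is exactly the stated $\hat{\sigma}^2$.

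The only step requiring any care is the inner minimization, but it is a standard AM-GM argument once the determinant constraint is rewritten via the change of variables $T = N_k S_k N_k^T$; no deeper obstacle arises because the weights $w_k/p_k$ enter only as positive multipliers on otherwise decoupled trace terms, so the minimizers $\hat{\Sigma}_k$ are independent of the weights and only $\hat{\sigma}^2$ picks them up through the normalized combination $w_k / \sum_i w_i$.
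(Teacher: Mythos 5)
Your proof is correct, and it takes a cleaner route than the one the paper refers to (the paper omits this proof, saying it is ``very similar'' to that of Proposition \ref{prop:mree}, whose appendix proof is the template). The key difference is in the inner minimization: you keep the constraint $|S_k|=1$ explicit, observe that for fixed $s$ the posterior expected loss decouples across $k$ because the weights $s^2 w_k/p_k$ are positive multipliers, and solve each subproblem $\min\{\tr(S_k\mathcal{E}_k^{-1}) : |S_k|=1\}$ by the change of variables $T = N_k S_k N_k^T$ plus AM--GM, which gives the unique global minimizer $\hat\Sigma_k = \mathcal{E}_k/|\mathcal{E}_k|^{1/p_k}$ in one stroke; profiling then reduces the $s$-step to a scalar convex problem. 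The paper's argument instead absorbs the scale via $\tilde S_k = s^2 S_k$, which couples the modes through the term $|\tilde S_k|^{1/p_k}\lambda^*$, and it establishes the minimizer by an eigenvalue parameterization with first- and second-derivative tests and a boundary-behavior check for global optimality, arguing mode-by-mode that the minimizer is independent of the other $S_j$'s. Your version buys a more elementary and self-contained verification of global optimality (AM--GM handles uniqueness and globality directly) and makes transparent why the weights $w_k$ affect only $\hat\sigma^2$ through the normalized combination $w_k/\sum_i w_i$; the paper's absorption trick is what generalizes its Proposition \ref{prop:mree} computation verbatim, at the cost of a more involved calculus argument. The only implicit assumptions you share with the paper are posterior propriety (so that $\mathcal{E}_k$ exists and is positive definite) and finiteness of $E[\log\sigma^2\,|\,X]$, so there is no gap relative to the paper's standard of rigor.
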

The proof is very similar to that of Proposition \ref{prop:mree} and is omitted.
This proposition states that only estimation of the scale is affected when we ``weight'' the loss more heavily for some
components of $\Sigma$ than others.

The posterior distribution may also be used to obtain the UMREE under Stein's original loss $L_S$, as it too is invariant under transformations of the lower triangular product group. However, risk minimization with respect to $L_S$ requires additional numerical approximations:
%A possible method would be to use an iterative coordinate descent algorithm, similar to the flip-flop algorithm when finding the MLE \citep{hoff2011separable}, which we now describe. 
Let $\mathcal{K}$ be the unique symmetric square root of $E[(\Sigma_K^{-1} \otimes \cdots \otimes \Sigma_1^{-1})/\sigma^2|X]$. This $\mathcal{K}$ may be approximated by the Gibbs sampler described in Section \ref{section:equi.proc}. Minimization of the risk with respect to $L_S$ is equivalent to the minimization in $(s^2,S_1,\ldots,S_K)$ of
\begin{align*}
E[L_S(S,\Sigma)|X] & = s^2\tr\left(\mathcal{K}\left(S_K\otimes\cdots\otimes S_1\right)\mathcal{K}\right) - p\log\left(s^2\right) + c(\Sigma)\\
&=s^2\left|\left|\tilde{\mathcal{K}} \times \left\{S_1^{1/2},\ldots,S_K^{1/2},I_p\right\}\right|\right|^2 - p\log\left(s^2\right) + c(\Sigma)\\
&=\tr\left(s^2S_k\tilde{\mathcal{K}}_{(k)}S_{-k}\tilde{\mathcal{K}}_{(k)}^T\right) - p\log\left(|s^2S_k|\right)/p_k + c(\Sigma),
\end{align*}
where $\tilde{\mathcal{K}} \in \mathbb{R}^{p_1\times\cdots\times p_K \times p}$ is the array such that $\tilde{\mathcal{K}}_{(K+1)} = \mathcal{K}$, and $S_k^{1/2}$ is any square root matrix of $S_k$. Iteratively setting $s^2S_k = (\tilde{\mathcal{K}}_{(k)}S_{-k}\tilde{\mathcal{K}}_{(k)}^T)^{-1}p/p_k$ will decrease the posterior expected loss at each step. 
This procedure is analogous to using the iterative flip-flop algorithm to 
find the MLE based on a sample covariance matrix of $E[(\Sigma_K^{-1} \otimes \cdots \otimes \Sigma_1^{-1})/\sigma^2|X]$. Application of the results from \citep{wiesel2012geodesic}
show that the posterior risk has a property known as geodesic convexity, implying that 
any local
 minimizer obtained from this algorithm will also be a global minimizer. 

\subsection{An orthogonally equivariant estimator}

\label{section:orthogonal}

The estimator in Proposition \ref{prop:mree} depends on the ordering of the indices, and so it is not permutation equivariant. Mirroring the ideas studied in \cite{takemura1983orthogonally}, in this section we derive a minimax orthogonally equivariant estimator (which is necessarily permutation equivariant) that dominates the UMREE of Proposition \ref{prop:mree}. 
First, notice that 
by transforming the data and then back-transforming the estimator, 
we can obtain an estimator whose risk is equal to that of the UMREE:
For $\Gamma = (1,\Gamma_1,\ldots, \Gamma_K) \in \{1\}\times \mathcal{O}_{p_1}\times\cdots\times \mathcal{O}_{p_K}$, where $\mathcal{O}_{p_k}$ is the group of $p_k$ by $p_k$ orthogonal matrices, let 
$\tilde X =  X \times \{ \Gamma_1,\ldots, \Gamma_K\}$. Then 
$\hat \Sigma (\tilde X)$ is an estimator of $\Gamma\Sigma \Gamma^T$ and 
 $\tilde \Sigma(X) =  \Gamma^{T} \hat \Sigma(\tilde X) \Gamma$ is an estimator of 
$\Sigma$. The risk of this estimator is the same as that of the UMREE
$\hat \Sigma(X)$:
%Given $\Gamma_k \in SL_{p_k}$ for $k = 1,\ldots,K$, so $\Gamma = \left(1,\Gamma_1,\ldots,\Gamma_K\right)$, set $\tilde{X} = X \times \{\Gamma_1,\ldots,\Gamma_K,I_n\}$, and denote $S(\tilde{X})$ as the estimator of $\Gamma\Sigma\Gamma^T = \left(\sigma^2,\Gamma_1\Sigma_1\Gamma_1^T,\ldots,\Gamma_K\Sigma_K\Gamma_k^T\right)$ from data $\tilde{X}$. We have
\begin{align*}
  R\left(\Sigma,\tilde \Sigma(X) \right) &= E\left[\left. L_M\left(\Sigma,\Gamma^{T} \hat \Sigma(\tilde{X})\Gamma \right)\right| \Sigma \right]\\
  &= E\left[\left. L_M\left(\Gamma\Sigma \Gamma^T, \hat\Sigma(\tilde{X})  \right)\right|\Sigma\right]\\
  &= E\left[\left. L_M\left(\Gamma \Sigma \Gamma^T, \hat \Sigma(X)  \right)\right| \Gamma \Sigma \Gamma^T\right]\\
  &= R\left(\Gamma\Sigma \Gamma^T,\hat \Sigma (X)\right)\\
  &= R\left(\Sigma,\hat \Sigma(X)\right)
\end{align*}
where the second equality follows from  the invariance of the loss, the third equality follows from a change of variables, and the last equality follows because the risk of $\hat \Sigma$ is constant over the parameter space. 
The UMREE $\hat \Sigma$ and the estimator $\tilde \Sigma$ have the same 
risks but are different.
Since multiway Stein's loss is convex in each argument, averaging these 
estimators somehow should produce a new estimator that dominates them both. 

%So the estimator in Proposition \ref{prop:mree} has the same constant risk as an the estimator $\Gamma^{-1} \hat \Sigma(\tilde{X})\Gamma^{-T}$. ``Averaging'' these two estimators, in some appropriate way, should uniformly decrease the risk.

In the  multivariate normal case in which $K=1$, 
%choosing $\Gamma_1$ be orthonormal and then averaging the estimators, 
averaging the value of 
$\Gamma^T\hat \Sigma (\Gamma X )\Gamma$
with respect to the uniform (invariant) measure for $\Gamma$ over the 
orthogonal group results in the estimator of \cite{takemura1983orthogonally}. This estimator is orthogonally equivariant, dominates the UMREE and is therefore also minimax.  
Constructing an analogous estimator in the multiway case  is more complicated, 
as it is not immediately clear how the back-transformed 
estimators should be averaged. Direct numerical averaging  
of estimates of $\sigma^2 ( \Sigma_1\otimes \cdots \otimes \Sigma_K)$ 
will generally produce an estimate that is  not separable and therefore outside of the parameter space. Similarly, averaging estimates of each 
$\Sigma_k$ separately will not work, as the space of covariance matrices 
with determinant one is not convex.

Our solution 
to this problem is to average a transformed version of  
$\Sigma= (\sigma^2,\Sigma_1,\ldots, \Sigma_K)$  for which
each $\Sigma_k$ lies in the convex set of trace-1 covariance matrices, then 
transform back to our original parameter space. 
The resulting estimator, which we call the 
multiway Takemura estimator (MWTE), is 
orthogonally equivariant and uniformly dominates the UMREE.
\begin{proposition}
\label{prop:takemura}
Let  $\hat \sigma^2 (\Gamma, X)$ and $\hat \Sigma_k(\Gamma, X)$ be the UMREEs of $\sigma^2$ and  $\Gamma_k\Sigma_k \Gamma_k^T$ 
based on 
 data $X \times \{\Gamma_1,\ldots,\Gamma_K,I_n\}$.
Let
\begin{align*}
  S_k(X) = \int_{\mathcal{O}_{p_K}} \cdots \int_{\mathcal{O}_{p_1}} \frac{\Gamma_k^T \hat \Sigma_k(\Gamma,X)\Gamma_k}{\tr\left(\hat \Sigma_k\left(\Gamma, X\right)\right)} \ d\Gamma_1 \cdots d\Gamma_K
\end{align*}
and
\begin{align*}
  \tilde \sigma^2 (X) = \int_{\mathcal{O}_{p_K}}\cdots \int_{\mathcal{O}_{p_1}} \hat \sigma^2 \left(\Gamma, X\right) \  d\Gamma_1 \cdots d\Gamma_K.
\end{align*}
Let  $\tilde \Sigma_k(X) =  S_k(X)/|S_k(X)|^{1/p_k}$  for $k = 1,\ldots,K$. Then $(\tilde \sigma^2(X),\tilde \Sigma_1(X),\ldots,\allowbreak \tilde \Sigma_K(X))$ is orthogonally equivariant and uniformly dominates the UMREE of Proposition \ref{prop:mree}.
\end{proposition}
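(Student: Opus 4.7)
The plan is to prove the two assertions of the proposition separately: orthogonal equivariance of the MWTE, and uniform domination of the UMREE.

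For orthogonal equivariance, I will use the right invariance of Haar measure on $\mathcal{O}_{p_k}$. Setting $Y = X \times \{H_1,\ldots,H_K,I_n\}$ for arbitrary $H_k \in \mathcal{O}_{p_k}$, note that $\hat\Sigma_k(\Gamma, Y) = \hat\Sigma_k(X \times \{\Gamma_1 H_1,\ldots,\Gamma_K H_K, I_n\})$. After the change of variables $\Gamma_k \mapsto \Gamma_k H_k^T$ in each Haar integral, right invariance yields $S_k(Y) = H_k S_k(X) H_k^T$ and $\tilde\sigma^2(Y) = \tilde\sigma^2(X)$. Since $|H_k|^2 = 1$, determinant normalization respects orthogonal conjugation, giving $\tilde\Sigma_k(Y) = H_k \tilde\Sigma_k(X) H_k^T$.

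For uniform domination, I will adapt the Takemura--Jensen argument to the multiway setting. As established in the paragraph preceding the proposition, for each $\Gamma$ the back-rotated estimator $\hat S^\Gamma(X) = (\hat\sigma^2(\Gamma,X),\, \Gamma_1^T \hat\Sigma_1(\Gamma,X)\Gamma_1, \ldots, \Gamma_K^T \hat\Sigma_K(\Gamma,X)\Gamma_K)$ has risk equal to that of the UMREE, so $R(\Sigma,\hat S) = \int R(\Sigma,\hat S^\Gamma)\,d\Gamma$. It therefore suffices to establish, in expectation over $X$, the inequality $L_M(\Sigma,\tilde S(X)) \le \int L_M(\Sigma,\hat S^\Gamma(X))\,d\Gamma$. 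The key algebraic identity is
\begin{align*}
L_M(\Sigma, S) \;=\; \sum_{k=1}^K \frac{p}{p_k}\, L_S(\sigma^2\Sigma_k,\, s^2 S_k),
\end{align*}
where $L_S$ is ordinary Stein's loss on $p_k\times p_k$ positive definite matrices; this follows from the constraints $|\Sigma_k|=|S_k|=1$. Each summand is convex in its matrix argument $M_k := s^2 S_k$, and the scalar log term $-Kp\log(s^2/\sigma^2)$ is controlled by concavity of $\log$: Jensen's inequality gives $\log\tilde\sigma^2 \ge \int \log \hat\sigma^2(\Gamma)\,d\Gamma$, contributing favorably to the MWTE's loss.

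The main obstacle is the trace part of the loss, since the MWTE does not simply average $M_k^\Gamma = \hat\sigma^2(\Gamma)\,\Gamma_k^T \hat\Sigma_k(\Gamma)\Gamma_k$: the direct Jensen average $\bar M_k = \int M_k^\Gamma\,d\Gamma$ is not separable as $\tilde\sigma^2 \tilde\Sigma_k$ with $|\tilde\Sigma_k|=1$, because different modes would force incompatible scalings $|\bar M_k|^{1/p_k}$. The two-stage normalization (trace-normalize, average, then det-normalize) is precisely the device that preserves $\mathcal{S}_{\mathbf{p}}^+$. Completing the proof will require showing that for each $k$,
\begin{align*}
\frac{p}{p_k}L_S(\sigma^2\Sigma_k, \tilde\sigma^2 \tilde\Sigma_k) \;\le\; \frac{p}{p_k}\int L_S(\sigma^2\Sigma_k,\, M_k^\Gamma)\,d\Gamma,
\end{align*}
by exploiting the convexity of $L_S$ in $M_k$, the AM--GM bound $\tr(\hat\Sigma_k(\Gamma,X)) \ge p_k$, and the fact that $\tilde\Sigma_k$ arises from convex combinations of trace-normalized matrices. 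Summing these $K$ inequalities and adding the log-term reduction yields the pointwise (and hence expected) domination.
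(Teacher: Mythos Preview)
Your orthogonal-equivariance argument via right invariance of Haar measure is fine; the paper does not spell this step out explicitly.

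For domination, your route diverges from the paper's and leaves a genuine gap. The paper does not decompose $L_M$ into ordinary Stein losses. Instead it reparameterizes each estimate component $S_k$ (with $|S_k|=1$) by its trace normalization $D_k=S_k/\tr(S_k)$, rewrites the loss as
\[
L_2(\Phi,D)=\frac{s^2}{\sigma^2}\sum_{k}\frac{p}{p_k}\,|D_k\Phi_k^{-1}|^{-1/p_k}\tr(D_k\Phi_k^{-1})-Kp\log\frac{s^2}{\sigma^2}-Kp
\]
with $\Phi_k=\Sigma_k/\tr(\Sigma_k)$, and argues convexity of $L_2$ in $s^2$ and in each $D_k$ (via convexity of $D\mapsto|D|^{-1/p_k}$). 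The payoff is that the MWTE is, by construction, exactly the coordinatewise Haar average in this $(s^2,D_1,\ldots,D_K)$ parameterization: $\tilde\sigma^2=\int\hat\sigma^2(\Gamma)\,d\Gamma$, and $S_k(X)$ is literally the average of the trace-one matrices $\Gamma_k^T\hat\Sigma_k(\Gamma)\Gamma_k/\tr(\hat\Sigma_k(\Gamma))$, with $\tr(S_k(X))=1$ so that $\tilde\Sigma_k/\tr(\tilde\Sigma_k)=S_k(X)$. Jensen then applies directly, with no residual obstacle.

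Your identity $L_M=\sum_k\frac{p}{p_k}L_S(\sigma^2\Sigma_k,s^2S_k)$ is correct, and $L_S$ is indeed convex in $M_k=s^2S_k$; but as you yourself diagnose, the MWTE is not the Haar average of the $M_k^\Gamma$, so convexity of $L_S$ in $M_k$-space does not apply directly. Your final paragraph only states what remains to be shown and gestures at AM--GM and ``convex combinations of trace-normalized matrices'' without carrying the argument out. It is not clear how those pieces would yield the per-mode inequality you want: the left side's trace term is $\tilde\sigma^2\,|S_k(X)|^{-1/p_k}\tr(S_k(X)\Sigma_k^{-1})$, which mixes a Haar average of $\hat\sigma^2(\Gamma)$ with a determinant-renormalized Haar average of trace-one matrices, and this does not reduce to a Jensen bound in $M_k$-space. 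The idea you are missing is to choose the parameterization so that the averaging that \emph{defines} the MWTE and the convexity of the loss live on the same coordinates; the paper's trace-one reparameterization accomplishes exactly this.
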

%A proof is in the appendix. 
Note that ``averaging'' over any subset of $\mathcal{O}_{p_1} \times \cdots \times \mathcal{O}_{p_K}$ in the manner of Proposition \ref{prop:takemura} will uniformly decrease the risk. By averaging with respect to the uniform measure over the orthogonal group, we obtain an estimator that has the attractive property of being orthogonally equivariant.

In practice it is computationally infeasible to integrate over the space of orthogonal matrices. However, we may obtain a stochastic approximation 
to the MWTE as follows: Independently
for each $t=1,\ldots,T$ and $k=1,\ldots, K$, 
simulate  $\Gamma_k^{(t)}$ from the  uniform distribution on  $\mathcal{O}_{p_k}$. Let 
\begin{align*}
  S_k(X) = \frac{1}{T}\sum_{t = 1}^T \frac{{\Gamma_k^{(t)}}^T\hat \Sigma_k\left(\Gamma^{(t)}, X\right)\Gamma_k^{(t)}}{\tr\left(\hat \Sigma_k\left(\Gamma^{(t)}, X\right)\right)} \ , \ 
  \tilde{\sigma}_T(X) = \frac{1}{T}\sum_{t = 1}^T \hat \sigma(\Gamma^{(t)}, X).
\end{align*}
Set  $\tilde\Sigma_{k,T}(X) = S_k(X)/|S_k(X)|^{1/p_k}$ for $k = 1,\ldots,K$. Then an approximation to the MWTE is 
\begin{align}
\label{equation:TMWTE}
\tilde \Sigma_T = \left(\tilde{\sigma}_T^2(X),\tilde{\Sigma}_{1,T}(X),\ldots,\tilde{\Sigma}_{K,T}(X)\right).
\end{align}
This is a randomized estimator which is orthogonally invariant in the sense of Definition 6.3 of \cite{eaton1989group}.

\subsection{Simulation results}

\label{section:sims}
We numerically compared the risks of the MLE, UMREE, and the 
%stochastic approximation to the 
MWTE under several three-way array normal distributions, using a variety of values of $(p_1,p_2,p_3)$ and with $n = 1$. For each $(p_1,p_2,p_3)$ under consideration,  we simulated 100 data arrays from the array normal model. 
As the risk of both the MLE and the UMREE are constant over the parameter 
space, it is sufficient to compare their risks at a single point in the parameter space, which we took to be $\Sigma=(1,I_{p_1},I_{p_2},I_{p_3})$. 
Risks were approximated by averaging the losses of each estimator across 
the 100 simulated data arrays.  For each data array, 
the MLE was obtained from the iterative coordinate descent algorithm outlined in \citep{hoff2011separable}. 
Each UMREE
was approximated based on 1250 iterations of the Gibbs sampler described in Section \ref{section:equi.proc}, from which the first 
250 iterations were discarded to allow for 
convergence to the stationary distribution (convergence appeared to be essentially immediate). 

\begin{figure}
\begin{center}
%\caption{For 100 draws from $N_{p_1 \times p_2 \times p_3}\left(0,1,\left\{I_{p_1},I_{p_2},I_{p_3}\right\}\right)$, ratio of mean loss for MLE to UMREE (solid black line) and MLE to approximation of MWTE for T = 2 (dotted line) and T = 3 (dotted/dashed line).} 
\includegraphics{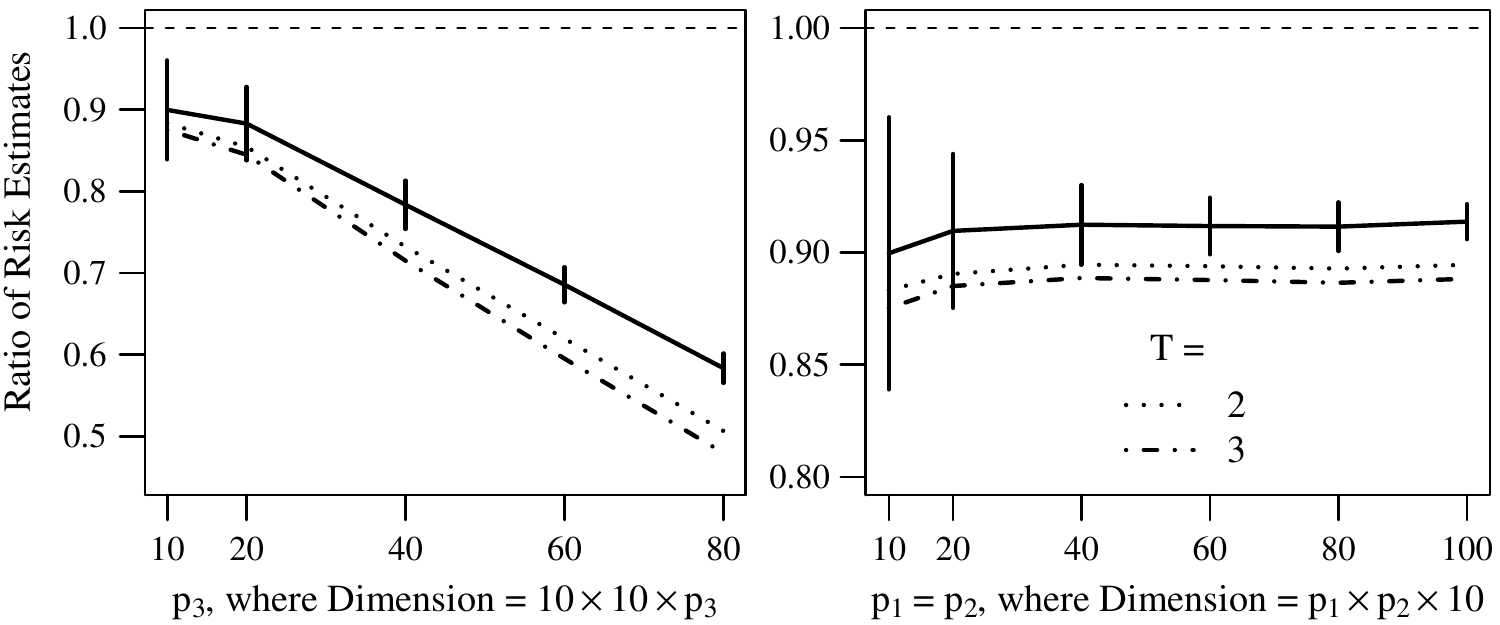}
\caption{Risk comparisons for the MLE, UMREE and MWTE. Both panels 
plot Monte Carlo estimates of the risk ratios of the UMREE to the MLE in solid lines, 
and the approximate MWTE to the MLE in dashed lines. The width of the vertical bars is one standard deviation of the ratio of the UMREE loss to the MLE loss, across 
the 100 data sets. }
\label{fig:risk.ratio}
\end{center}
\end{figure}

The ratio of risk estimates 
across several values of $(p_1,p_2,p_3)$  
are  are plotted in solid lines 
in Figure \ref{fig:risk.ratio}. 
We considered array dimensions in which the first two dimensions 
were identical. This scenario could correspond to, for example, 
data arrays representing 
 longitudinal relational or network  measurements 
between $p_1=p_2$ nodes at $p_3$ time points.
The first panel of the figure considers the relative performance of the 
estimators as the ``number of time points'' ($p_3$) increases. 
The results indicate that 
the UMREE provides substantial and increasing risk improvements compared to  the MLE
as $p_3$ increases. 
However,  the right panel indicates that the gains are not as dramatic 
and not increasing
when the ``number of nodes'' ($p_1=p_2$) increases while $p_3$ remains fixed. 
Even so, the variability in the ratio of losses (shown with vertical bars) decreases as the number of nodes increases, indicating an increasing probability that that the UMREE will beat the MLE in terms of loss. 

We also compared these risks to  the risk of the approximate MWTE given in (\ref{equation:TMWTE}), with $T\in \{2,3\}$. The risks for the approximate 
MWTE relative to those of the MLE are shown 
in dashed lines in the two panels of the Figure, and indicate non-trivial 
improvements in risk as compared to the UMREE. We examined values of $T$
greater than $3$ but found no appreciable further reduction in the risk.
 Note, however, that the MWTE does not have constant risk over the parameter space (though MWTE will have constant risk over the orbits of the orthogonal product group).

\section{Discussion}
\label{section:discussion}
This article has extended the results of \cite{james1961estimation} and \cite{takemura1983orthogonally} by developing equivariant and minimax estimators of the covariance parameters in the array normal model. 
Considering the class of estimators equivariant with respect to a 
special lower triangular group, we showed that the uniform minimum risk equivariant estimator (UMREE) can be viewed as a generalized Bayes estimator 
that can be  obtained from a simple Gibbs sampler.
We obtained an orthogonally equivariant estimator based on this UMREE by 
combining values of the UMREE under orthogonal transformations of the data. 
Both the UMREE and the orthogonally equivariant estimator are minimax, and both 
dominate any unique MLE in terms of risk.
%Development of these results has included the 
%development of the mirror Wishart distribution over positive definite 
%matrices, and multiway Stien's loss, a 
%generalization of  Stein's loss to the 
%array normal model. 

Empirical results in Section 4 indicate that the risk improvements of the UMREE 
over the MLE can be substantial, 
while the improvements of the orthogonally equivariant estimator over the UMREE 
are more modest. However, the risk improvements depend on 
the array dimensions 
in a way that is not currently understood. 
Furthermore, we 
do not yet know the minimal conditions necessary for the 
propriety of the posterior or the existence of the 
UMREE. 
Empirical results from the simulations in Section 4 suggest that 
the UMREE exists for sample sizes as low as $n=1$, at least for the array 
dimensions in the study. 
This is similar to the current state of knowledge for the existence of the 
MLE:
The array normal likelihood is trivially bounded for $n\geq p$  
(as it is bounded by the maximized likelihood under the unconstrained
$p$-variate normal model), and 
some sufficient conditions for uniqueness of the MLE 
are given in  \cite{ohlson2013multilinear}. However, 
empirical results (not shown) suggest that a unique  MLE may exist 
for $n=1$ for some array dimensions (although not for others). 
Obtaining necessary and sufficient conditions for the existence of the UMREE and the MLE 
is an ongoing area of research of the authors.

% The development of covariance estimation in the multivariate normal setting saw the greatest improvements over the MLE, in terms of risk, by shrinking the eigenstructure of the sample covariance matrix \citep{stein1975estimation,lin1985monte,haff1991variational}. Perhaps similar results are attainable by shrinking the eigenstructure of the MLE's in the array normal model. Of course, these estimators would (probably) depend on the MLE's existence, the necessary and sufficient conditions of which are unknown for the array normal model.

%\input{acknowledgements}
\appendix

\section{Proofs}

\subsection{Proof of Theorem \ref{theorem:haar.measure}}
\begin{proof}
  Let $a > 0$ , $A_k \in \mathcal{G}_{p_k}^+$ for all $k = 1,\ldots,K$. Let $t$ be a fixed element in $\mathbb{R}^+$ and $T_k$ be fixed elements in $\mathcal{G}_{p_k}^+$ for $k = 1,\ldots,K$. In the terminology of Definition 1.7 of \cite{eaton1989group}, the integral with respect to Lebesgue measure is relatively right invariant with multiplier
  \begin{align*}
    \chi \left(t,T_1,\ldots,T_K\right) = t\prod_{k = 1}^{K}\prod_{i=2}^{p_k}T_{k[i,i]}^{2-i}
  \end{align*}
if the following holds:
  \begin{align}
    \begin{split}
      \label{equation:relatively.invariant}
    &\int_{\mathcal{G}_{\mathbf{p}}^+} f(a/t,A_1T_1^{-1},\ldots,A_KT_K^{-1})d\mu\left(a,A_1,\ldots,A_K\right)\\
    &=\left(t\prod_{k = 1}^{K}\prod_{i=2}^{p_k}T_{k[i,i]}^{2-i}\right)\int_{\mathcal{G}_{\mathbf{p}}^+} f(a,A_1,\ldots,A_K)d\mu\left(a,A_1,\ldots,A_K\right),
  \end{split}
  \end{align}
for arbitrary $f()$. If (\ref{equation:relatively.invariant}) holds, then by Theorem 1.6 of \cite{eaton1989group}, a right invariant measure over the group $\mathcal{G}_{\mathbf{p}}^+$ is
  \begin{align*}
    \chi \left(a,A_1,\ldots,A_K\right)^{-1} = \frac{1}{a}\prod_{k=1}^{K} \prod_{i=2}^{p_k}A_{k[i,i]}^{i - 2} d\mu.
  \end{align*}

  It remains to make a change of variables to show that (\ref{equation:relatively.invariant}) holds. For $E_k, T_k \in \mathcal{G}_{p_k}^+$ with $T_k$ fixed for $k = 1,\ldots,K$, let $g_k(E_k) =  E_k T_k$ for $k = 1,\ldots, K$. For $e,t > 0$ with $t$ fixed let $g(e) = et$. The Jacobian for transforming the scale, $g(e) = et$, is $t$. The Jacobian for the transformation $g_k(E_k) = E_k T_k$ is
  \begin{align}
    \label{equation:gpk.jacobian}
    J(E_k) = \prod_{i=2}^{p_k}T_{k[i,i]}^{2-i}.
  \end{align}
To see this, note that this transformation is equivalent to $p_k(p_k+1)/2 - 1$ linear transformations of the form:
  \begin{align*}
  g_{i,j}: E_{k[i,j]} \mapsto \sum_{j \leq m \leq i} E_{k[i,m]}T_{k[m,j]} \mbox{ for all } 1 \leq j \leq i \leq p_k \mbox{ s.t. } (i,j) \neq (1,1).
  \end{align*}
 Stack the elements of $E_k$ into the following vector:
  \begin{align*} 
    s =& (E_{k[p_k,p_k]}, E_{k[p_k,p_k-1]}, E_{k[p_k-1,p_k-1]}, E_{k[p_k,p_k-2]},\\
& E_{k[p_k-1,p_k-2]}, E_{k[p_k-2,p_k-2]}, E_{k[p_k,p_k-3]}, \ldots, E_{k[2,1]} ),
  \end{align*}
  and notice that the matrix of the linear transformation is lower triangular:
\begin{align*}
&u = \\
  &\left(
    \begin{array}{c@{\hspace{2pt}}c@{\hspace{2pt}}c@{\hspace{2pt}}c@{\hspace{2pt}}c@{\hspace{2pt}}c@{\hspace{2pt}}c}
      T_{k[p_k,p_k]} & 0 & 0 & 0 & \cdots & & \\
      T_{k[p_k,p_k -1]} &  T_{k[p_k - 1,p_k - 1]} & 0 & 0 & & & \\
      0 & 0 & T_{k[p_k - 1,p_k - 1]} & 0 & & &\\
      T_{k[p_k,p_k - 2]} & T_{k[p_k-1,p_k-2]} & 0 & T_{k[p_k - 2,p_k - 2]} & & &\\
      \vdots & & & & \ddots &  & \\
      \vdots & & & & &  \ddots  & \\
      0 & \cdots & \cdots & \cdots & T_{k[2,1]} & \cdots & T_{k[1,1]}
    \end{array}
  \right)
\end{align*}\normalsize
where in the diagonal, each $T_{k[i,i]}$ is repeated $p_k - i + 1$ times for $i = 2,3,\ldots,p_k$, and $T_{k[1,1]}$ is repeated $p_k - 1$ times.
That is, the linear transformation can be written as:
\[
g_k(s) = us.
\]
Hence the determinant of the Jacobian is
\begin{align*}
  |u| = T_{k[1,1]}^{p_k-1} \prod_{i=2}^{p_k}T_{k[i,i]}^{p_k-i+1} = \prod_{i=2}^{p_k}T_{k[i,i]}^{2 - i},
\end{align*}
where the second equality results from our parameterization of $\mathcal{G}_{p_k}^+$, \[\prod_{i=2}^{p_k}T_{k[i,i]}^{-1} = T_{k[1,1]}.\]
\end{proof}

\subsection{Proof of Theorem \ref{theorem:proper.posterior}}
Consider the reformulation of the problem to a parameterization of $\Sigma = \sigma^2(\Psi_K\Psi_K^T\otimes\cdots\otimes\Psi_1\Psi_1^T)$ where $\Psi_{k[1,1]} = 1$ for $k = 1,\ldots,K$. That is, we now work with the group $\mathcal{G}_{\mathbf{p}}^{1} = \{(a,A_1,\ldots,A_K)|a > 0, A_k \in \mathcal{G}_{p_k}^1 \text{ for } k =1,\ldots,K\}$ where $\mathcal{G}_{p_k}^1$ is the group of $p_k$ by $p_k$ lower triangular matrices with positive diagonal elements and $1$ in the $(1,1)$ position. The group operation in $\mathcal{G}_{p_k}^1$ is matrix multiplication, and that of $\mathcal{G}_{\mathbf{p}}^1$ is component-wise multiplication. The left and right Haar measures over $\mathcal{G}_{p_k}^1$ are easy to derive:
\begin{lemma}
  \label{lemma:jacob.left.right}
  For $E_k,T_k \in \mathcal{G}_{p_k}^1$ with $T_k$ fixed, the Jacobian for the transformation $g(E_k) = E_kT_k$ is
  \begin{align*}
    J(E_k) = \prod_{i=2}^{p_k}T_{k[i,i]}^{p_k - i + 1}
  \end{align*}
  the Jacobian for the transformation $g(E_k) = T_kE_k$ is
  \begin{align*}
    J(E_k) = \prod_{i=2}^{p_k}T_{k[i,i]}^{i}
  \end{align*}
\end{lemma}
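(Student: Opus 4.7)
The plan is to mirror the Jacobian calculation carried out in the proof of Theorem \ref{theorem:haar.measure}, adapted to the slightly different group $\mathcal{G}_{p_k}^1$ in which the $(1,1)$ entry is constrained to equal $1$ (rather than being determined by the product of the other diagonal entries as in $\mathcal{G}_{p_k}^+$). The free parameters on $\mathcal{G}_{p_k}^1$ are $\{E_{k[i,j]} : 1 \le j \le i \le p_k,\ (i,j)\neq (1,1)\}$. For each map $g$, I would write the matrix-product formula coordinate-wise, compute partial derivatives, choose an ordering of the free entries that exhibits a triangular Jacobian, and read off the determinant as a product of diagonal entries.

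For the right-multiplication map $g(E_k)=E_k T_k$, the identity $(E_kT_k)_{[i,j]}=\sum_{m=j}^{i}E_{k[i,m]}T_{k[m,j]}$ shows that the $(i,j)$ output depends only on $E_{k[i,\cdot]}$ with column index $\ge j$, and its derivative with respect to $E_{k[i,j']}$ is $T_{k[j',j]}$; in particular the derivative with respect to $E_{k[i,j]}$ itself is $T_{k[j,j]}$. Ordering the free entries column-by-column from $j=p_k$ down to $j=1$ (as in Theorem \ref{theorem:haar.measure}) makes the Jacobian lower triangular, and the diagonal collects $T_{k[j,j]}$ once for each of the $p_k-j+1$ free entries in column $j\ge 2$, together with $p_k-1$ copies of $T_{k[1,1]}$ from column $1$. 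Since $T_{k[1,1]}=1$ in $\mathcal{G}_{p_k}^1$, the column-$1$ contribution is trivial and the determinant collapses to $\prod_{i=2}^{p_k}T_{k[i,i]}^{p_k-i+1}$, as claimed.

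For the left-multiplication map $g(E_k)=T_kE_k$, the identity $(T_kE_k)_{[i,j]}=\sum_{m=j}^{i}T_{k[i,m]}E_{k[m,j]}$ shows that outputs in column $j$ depend only on inputs in column $j$ of $E_k$, so the Jacobian is block-diagonal when grouped by columns. Within each column block the derivative with respect to $E_{k[i,j]}$ is $T_{k[i,i]}$, and the block is triangular when rows are ordered from $i=j$ to $i=p_k$. Collecting diagonal entries across all columns, each $T_{k[i,i]}$ with $i\ge 2$ contributes once per column $j\in\{1,\dots,i\}$ in which $(i,j)$ is free, namely $i$ times (the entry $(1,1)$ being the only missing one because it is fixed), yielding the determinant $\prod_{i=2}^{p_k}T_{k[i,i]}^{i}$.

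The main bookkeeping obstacle is simply keeping track of which coordinates are free under the $\mathcal{G}_{p_k}^1$ constraint and of the column/row ordering that makes the Jacobian triangular; once those are fixed, the calculation reduces to counting how many free entries of $E_k$ carry each $T_{k[i,i]}$ on the diagonal. A sanity check in the $p_k=2$ case, where $\mathcal{G}_{p_k}^1$ has two free parameters, recovers $T_{k[2,2]}$ for the right Jacobian and $T_{k[2,2]}^2$ for the left Jacobian, matching the two claimed formulas.
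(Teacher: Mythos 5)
Your computation is correct and is essentially the paper's own argument: the paper simply defers to the standard Jacobian calculations for right and left multiplication on the lower-triangular group (Propositions 5.13 and 5.14 of Eaton, 1983), noting that $T_{k[1,1]}=1$, and your coordinate-wise triangularization --- one factor of $T_{k[j,j]}$ (right case) or $T_{k[i,i]}$ (left case) per free entry, with the $(1,1)$ constraint only removing factors equal to one --- is exactly that computation written out, mirroring the proof of Theorem \ref{theorem:haar.measure}. No gaps; your $p_k=2$ check confirms both exponent counts.
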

So the right Haar measure is
\begin{align*}
  d\nu_r(E_k) = \prod_{i=2}^{p_k}E_{k[i,i]}^{-p_k + i - 1}
\end{align*}
\begin{proof}
  The proof is very similar to those in Propositions 5.13 and 5.14 of \cite{eaton1983multivariate}, noting that $T_{k[1,1]} = 1$.
\end{proof}

We'll eventually need the inverse transformation, which follows directly from Theorem 3 of chapter 8 section 4 of \cite{magnus1988matrix}.

\begin{lemma}
  \label{lemma:jacob.inv}
  For $E_k \in \mathcal{G}_{p_k}^1$, the Jacobian for the transformation $g(E_k) = E_k^{-1}$ is
  \begin{align}
    \label{equation:jacob.inv}
    \prod_{i = 2}^{p_k}E_{k[i,i]}^{-p_k - 1}
  \end{align}
\end{lemma}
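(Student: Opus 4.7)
The plan is to invoke the standard Jacobian formula for matrix inversion on the space of (unconstrained) positive-diagonal lower triangular matrices from Magnus and Neudecker, and then specialize to the codimension-one subgroup $\mathcal{G}_{p_k}^1$ cut out by the constraint $E_{[1,1]}=1$.

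First I would check that inversion really is a map from $\mathcal{G}_{p_k}^1$ to itself: if $E$ is lower triangular with positive diagonal, then $E^{-1}$ is again lower triangular with positive diagonal entries $1/E_{[i,i]}$, and in particular $(E^{-1})_{[1,1]} = 1/E_{[1,1]} = 1$ under the constraint. So $g$ is a bijection of $\mathcal{G}_{p_k}^1$. Next, applying the Magnus--Neudecker formula, the Jacobian of $g(E)=E^{-1}$ with respect to the $p_k(p_k+1)/2$ free entries of a lower triangular $E$ with positive diagonal has absolute value
\[
|J_{\mathrm{full}}(E)| \;=\; \prod_{i=1}^{p_k} E_{[i,i]}^{-(p_k+1)}.
\]

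To pass to the constrained parameterization of $\mathcal{G}_{p_k}^1$, I would order the $p_k(p_k+1)/2$ lower-triangular coordinates so that $E_{[1,1]}$ is first, with the remaining $p_k(p_k+1)/2-1$ coordinates (which parameterize $\mathcal{G}_{p_k}^1$) coming after. Because $E$ is lower triangular, $(E^{-1})_{[1,1]} = 1/E_{[1,1]}$ depends only on $E_{[1,1]}$, so the full Jacobian matrix has the block form
\[
J_{\mathrm{full}} \;=\; \begin{pmatrix} -E_{[1,1]}^{-2} & 0 \\ * & J_{\mathrm{sub}} \end{pmatrix},
\]
where $J_{\mathrm{sub}}$ is exactly the Jacobian of the induced inversion map on the $\mathcal{G}_{p_k}^1$-coordinates. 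Taking determinants gives
\[
|J_{\mathrm{sub}}| \;=\; E_{[1,1]}^{2}\, |J_{\mathrm{full}}| \;=\; E_{[1,1]}^{1-p_k} \prod_{i=2}^{p_k} E_{[i,i]}^{-(p_k+1)}.
\]
Evaluating at $E_{[1,1]}=1$ collapses the first factor to $1$ and yields $\prod_{i=2}^{p_k} E_{k[i,i]}^{-p_k-1}$, which is the claim.

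The main obstacle is less a calculation than a careful bookkeeping step: one must verify that the block-triangular structure above is correct, i.e.\ that $(E^{-1})_{[1,1]}$ genuinely depends only on $E_{[1,1]}$ (which follows from lower-triangularity) so that the off-diagonal block above $J_{\mathrm{sub}}$ is zero. Once that is in place, the specialization to the constraint surface is a one-line determinant computation, and the only other ingredient needed is the cited Magnus--Neudecker Jacobian formula for the unconstrained case.
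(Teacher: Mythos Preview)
Your argument is correct but takes a different route from the paper's. The paper works directly in $\mathcal{G}_{p_k}^1$ throughout: it invokes the differential identity $d(E_k^{-1}) = -E_k^{-1}(dE_k)E_k^{-1}$ from Magnus--Neudecker, then reads off the Jacobian as the product of the left- and right-multiplication Jacobians on $\mathcal{G}_{p_k}^1$ already computed in the preceding lemma (giving $\prod_{i\ge 2} E_{k[i,i]}^{-i}\cdot \prod_{i\ge 2} E_{k[i,i]}^{-p_k+i-1}$). You instead lift to the full group $G_{p_k}^+$, cite the standard inversion Jacobian $\prod_{i\ge 1} E_{[i,i]}^{-(p_k+1)}$ there, and then descend to the slice $E_{[1,1]}=1$ via a block-triangular argument using that $(E^{-1})_{[1,1]}$ depends only on $E_{[1,1]}$. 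The paper's approach is slightly more self-contained since it reuses its own multiplication-Jacobian lemma; your approach is a clean one-line specialization once the unconstrained result is granted, and the block-triangularity observation you isolate is a nice structural point that would transfer to other codimension-one restrictions. One minor caveat: what Magnus--Neudecker literally provides is the differential identity, not the lower-triangular determinant $\prod_i E_{[i,i]}^{-(p_k+1)}$ itself, so you should either cite that determinant separately (e.g.\ from Eaton) or note that it follows by combining the differential with the standard left/right multiplication Jacobians on $G_{p_k}^+$.
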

\begin{proof}
  From \cite{magnus1988matrix}, $d(E_k^{-1}) = -E_k^{-1}(dE_k)E_k^{-1}$. Using Lemma \ref{lemma:jacob.left.right}, the Jacobian of the first transformation, $g_1(dE_k) = E_k^{-1}(dE_k)$  is $\prod_{i=2}^{p_k}E_{k[i,i]}^{-i}$. Jacobian of the second transformation $g_2(dE_k) = (dE_k)E_k^{-1}$ is $\prod_{i=2}^{p_k}E_{k[i,i]}^{-p_k + i - 1}$. Hence, overall Jacobian is (\ref{equation:jacob.inv}).
\end{proof}

Under this new parameterization, the likelihood is
\begin{align*}
  &p(X|\sigma,\Psi_1,\ldots,\Psi_K) \\
  &= (2\pi)^{np/2}|\sigma^2(\Psi_K\Psi_K^T \otimes \cdots \otimes \Psi_1\Psi_1^T)|^{-n/2}\\
  & \times \exp\{-||X \times \{\Psi_1^{-1},\ldots,\Psi_K^{-1},I_n\}||^T/(2\sigma^2)\}\}\\
  &\propto \sigma^{-np}\prod_{k=1}^{K}\prod_{i=2}^{p_k}\Psi_{k[i,i]}^{-np/p_k} \exp\{-||X \times \{\Psi_1^{-1},\ldots,\Psi_K^{-1},I_n\}||^T/(2\sigma^2)\}\},
\end{align*}
where $p = \prod_{k = 1}^Kp_k$. The (improper) prior is
\begin{align*}
  \pi(\sigma,\Psi_1,\ldots,\Psi_K) \propto \frac{1}{\sigma}\prod_{k = 1}^K\prod_{i=2}^{p_k}\Psi_{k[i,i]}^{i - p_k - 1}.
\end{align*}
Hence, the posterior is
\begin{align*}
  \sigma^{-np - 1}\prod_{k=1}^{K}\prod_{i=2}^{p_k}\Psi_{k[i,i]}^{i - np/p_k - p_k - 1} \exp\{-||X \times \{\Psi_1^{-1},\ldots,\Psi_K^{-1},I_n\}||^T/(2\sigma^2)\}.
\end{align*}
Since $\sigma^2|\Psi \sim \mbox{ inverse-gamma}(np/2,||X \times \{\Psi_1^{-1},\ldots,\Psi_K^{-1},I_n\}||^2/2$), we can integrate out $\sigma^2$, obtaining
\begin{align*}
  \pi(\Psi_1,\ldots,\Psi_K|X) \propto ||X \times \{\Psi_1^{-1},\ldots,\Psi_K^{-1},I_n\}||^{-np}\prod_{k=1}^{K}\prod_{i=2}^{p_k}\Psi_{k[i,i]}^{i - np/p_k - p_k - 1}.
\end{align*}
Let $S = X_{(K+1)}^TX_{(K+1)}$, the sample covariance matrix, then
\begin{align*}
  &\pi(\Psi_1,\ldots,\Psi_K|X)\\
  &\propto \tr[S(\Psi_K^{-T}\Psi_K^{-1}\otimes\cdots\otimes\Psi_1^{-T}\Psi_1^{-1})]^{-np/2}\prod_{k=1}^{K}\prod_{i=2}^{p_k}\Psi_{k[i,i]}^{i - np/p_k - p_k - 1}.
\end{align*}
Let $L_k = \Psi_k^{-1}$ for $k = 1,\ldots,K$. Then, using Lemma \ref{lemma:jacob.inv}, we have
\begin{align}
  \label{equation:post.l}
  \nonumber &\pi(L_1,\ldots,L_K|X)\\
  &\propto \tr[S(L_K^{T}L_K\otimes\cdots\otimes L_1^{T}L_1)]^{-np/2}\prod_{k=1}^{K}\prod_{i=2}^{p_k}L_{k[i,i]}^{np/p_k - i}
\end{align}
The posterior density is integrable if and only if (\ref{equation:post.l}) is integrable. We will now prove that when $n > \prod_{k=1}^K p_k$ then (\ref{equation:post.l}) is integrable. First, consider, consider the integral over $\mathcal{G}_{p}^1$, where $p = \prod_{k=1}^Kp_k$,
\begin{align}
\label{equation:v.s}
 &\int_{\mathcal{G}_{p}^1}\tr\left(VSV^T\right)^{-np/2}\prod_{i=2}^{p}V_{[i,i]}^{np - p - 1}dV
\end{align}
 Let $e = (1,0,\ldots,0)^T$, the vector of length $p$ with a $1$ in the first position and $0$'s everywhere else. Then $V = (e^T, V_2^T)^T$ and
\begin{align*}
  \tr&(VSV^T) = tr(e_1^TSe_1) + tr(V_2SV_2^T) = S_{[1,1]} + tr(V_2SV_2^T)\\
  &=(1 + tr(V_2SV_2^T)/S_{[1,1]})S_{[1,1]} =(1 + tr(V_2S_T(V_2S_T)^T)/S_{[1,1]})S_{[1,1]},
\end{align*}
where $S = S_TS_T^T$ is the lower triangular Cholesky decomposition of $S$. Let $W = V_2S_T$, so $V_2 = WS_T^{-1}$. The Jacobian of this transformation is $S_{T[1,1]}^{1-p}\prod_{i = 2}^pS_{T[i,i]}^{i-p-1}$ (same as the Jacobian in Proposition 5.14 of \cite{eaton1983multivariate} except we have one less $S_{T[1,1]}$ term). Then Equation (\ref{equation:v.s}) is proportional to
\begin{align*}
&\int_{\mathcal{G}_{p}^1}\left((1 + \tr(WW^T)/S_{[1,1]}\right)^{-np/2}\prod_{i=2}^{p}W_{[i,i]}^{np - p - 1}dW\\
&=\int_{\mathcal{G}_{p}^1}\left((1 + \mathbf{w}D\mathbf{w}/(np - p)\right)^{-(np - p + p)/2}\prod_{i=2}^{p}W_{[i,i]}^{np - p - 1}dW,
\end{align*}
where $\mathbf{w}$ is a vector containing all the non-zero elements of $W$ and $D = (n - p)I_p/S_{[1,1]}$. Notice that $\left((1 + \mathbf{w}D\mathbf{w}/(np - p)\right)^{-(np - p + p)/2}$ is the kernel of a multivariate $T$ distribution with  degrees of freedom $np - p$ and scale matrix $D^{-1} = S_{[1,1]}I_{p}/(np-p)$ \citep[equation (1.1)]{kotz2004multivariate}. Note that $E[W_{[i,j]}^{\nu}] < \infty$ if $\nu < n - p$ \citep[section 1.7]{kotz2004multivariate}. In particular, $n - p - 1 < n - p$. Hence
\begin{align*}
&\int_{\mathcal{G}_{p}^1}\tr\left(VSV^T\right)^{-np/2}\prod_{i=2}^{p}V_{[i,i]}^{np - p - 1}dV < \infty
\end{align*}
Using this, we have the following inequalities:
\begin{align*}
\infty &> \int_{\mathcal{G}_{p}^1} \tr\left[VSV^T\right]^{-np/2}\prod_{i=1}^pV_{k[i,i]}^{np - p - 1}dV\\
&= \int_{\mathcal{G}_{p}^1} \tr\left[VSV^T\right]^{-np/2}|V|^{np - p - 1}dV\\
&\geq \int_{\mathcal{G}_{p_1}^1 \times \cdots \times \mathcal{G}_{p_K}^1} \tr\left[(L_K\otimes\cdots\otimes L_1)S(L_K\otimes\cdots\otimes L_1)^T\right]^{-np/2}\\
&\times|L_K\otimes\cdots\otimes L_1|^{np - p- 1}dL_1\cdots dL_K\\
&=\int_{\mathcal{G}_{p_1}^1 \times \cdots \times \mathcal{G}_{p_K}^1} \tr\left[S(L_K^TL_K\otimes\cdots\otimes L_1^TL_1)\right]^{-np/2}\\
&\times\prod_{k=1}^K \prod_{i=2}^{p_k} L_{k[i,i]}^{(np - p - 1)p/p_k}dL_1\cdots dL_K,
\end{align*}
where the second inequality results from integrating over a smaller space. Note the following results: (1) $(np - p - 1)p/p_k \geq np/p_k - i_k$ for all $k = 1,\ldots,K$ and $i_k = 2,\ldots,p_k$ if $n \geq p$, (2) $L_{k[i,i]} > 0$, and (3) $E[|X|^{r_1}] < \infty$ and $r_1 > r_2$ $\Rightarrow E[|X|^{r_2}] < \infty$. Hence,
\begin{align*}
\infty&> \int_{\mathcal{G}_{p_1}^1 \times \cdots \times \mathcal{G}_{p_K}^1} \tr[S(L_K^{T}L_K\otimes\cdots\otimes L_1^{T}L_1)]^{-np/2}\\
&\times\prod_{k=1}^{K}\prod_{i=2}^{p_k}L_{k[i,i]}^{np/p_k - i}dL_1\cdots dL_K
\end{align*}
and the result is proved.

\subsection{Proof of Lemma \ref{lemma:e.sw}}
\begin{proof}
Let $VV^T$ be the lower triangular Cholesky decomposition of a Wishart$_p(\nu,I_{p})$-distributed random matrix. Recall from Bartlett's decomposition \citep{bartlett1933theory} that the elements of $V$ are independent with
\begin{align*}
  V_{[i,i]}^2 \sim \chi^2_{\nu - i + 1} \mbox{ and } V_{[i,j]} \sim N(0,1).
\end{align*}
Let $S = V^TV$. For $i \neq j$, we have
\begin{align*}
E\left[S_{[i,j]}\right] &= E\left[\sum_{k = 1}^p V_{[k,i]}V_{[k,j]}\right] = \sum_{k = 1}^pE\left[V_{[k,i]}\right] E\left[V_{[k,j]}\right].
\end{align*}
For $i \neq j$, we have either $E[V_{[k,i]}] = 0$ or $E[V_{[k,j]}] = 0$ for all $k = 1, \ldots, p$. Hence, $E[S_{[i,j]}] = 0$ for all $i \neq j$.

For $i = j$, we have
\begin{align*}
E\left[S_{[i,j]}\right] &= E\left[\sum_{k = 1}^p V_{[k,i]}V_{[k,j]}\right] = \sum_{k = 1}^pE\left[V_{[k,i]}^2\right] = E\left[V_{[i,i]}^2\right] + \sum_{k = i + 1}^{p}E\left[V_{[k,i]}^2\right]\\
&= \nu - i + 1 + \sum_{k = i + 1}^{p}1 = \nu - i + 1 + p - i = \nu + p + 1 - 2i.
\end{align*}
This expectation has been calculated in other papers \citep[for example]{james1961estimation,eaton1987best}.
\end{proof}

\subsection{Proof of Lemma \ref{lemma:det.to.lower}}
\begin{proof}
  We proceed by invariance arguments. The Jacobian, $J(\sigma,\Psi)$, is the unique continuous function that satisfies
  \begin{align*}
    \int_{G_{p_k}^+} f(L) \frac{dL}{\prod_{i=1}^{p_k} L_{[i,i]}^{p_k - i + 1}} &= \int_{\mathbb{R} \times \mathcal{G}_{p_k}^+} f(\sigma\Psi) \frac{J(\sigma,\Psi) d\sigma d\Psi}{\prod_{i=1}^{p_k} (\sigma \Psi_{[i,i]})^{p_k - i + 1}}\\
    &= \int_{\mathbb{R} \times \mathcal{G}_{p_k}^+} f(\sigma\Psi) \frac{J(\sigma,\Psi) d\sigma d\Psi}{\sigma^{p_k(p_k+1)/2}\prod_{i=1}^{p_k} \Psi_{[i,i]}^{p_k-i+1}},
  \end{align*}
  where $dL/(\prod_{i=1}^{p_k} L_{[i,i]}^{p_k-i+1})$ is a right invariant measure with respect to the action $L \mapsto LA$ on $G_{p_k}^+$ for $A \in G_{p_k}^+$ \citep[Proposition 5.14]{eaton1983multivariate}. Hence, this invariance property must also hold for the right integral. So for $b > 0$ and $B \in \mathcal{G}_{p_k}^+$, we have that $bB \in G_{p_k}^+$ and
  \begin{align*}
    &\int_{\mathbb{R} \times \mathcal{G}_{p_k}^+} f(\sigma\Psi) \frac{J(\sigma,\Psi) d\sigma d\Psi}{\sigma^{p_k(p_k+1)/2}\prod_{i=1}^{p_k} \Psi_{[i,i]}^{p_k - i + 1}}\\
    & = \int_{\mathbb{R} \times \mathcal{G}_{p_k}^+} f(b\sigma \Psi B) \frac{J(\sigma,\Psi) d\sigma d\Psi}{\sigma^{p_k(p_k+1)/2}\prod_{i=1}^{p_k} \Psi_{[i,i]}^{p_k - i +1}}.
  \end{align*}
  So making the change of variables $\sigma = e/b$ and $\Psi = EB^{-1}$ , we have
  \begin{align*}
    &\int_{\mathbb{R} \times \mathcal{G}_{p_k}^+} f(b\sigma\Psi B) \frac{J(\sigma,\Psi) d\sigma d\Psi}{\sigma^{p_k(p_k+1)/2}\prod_{i=1}^{p_k} \Psi_{[i,i]}^{p_k - i + 1}}\\
    &= \int_{\mathbb{R} \times \mathcal{G}_{p_k}^+} f(eE) \frac{\frac{1}{b} \prod_{i=2}^{p_k}B_{[i,i]}^{i - 2} J(e/b,EB^{-1}) de dE}{(e/b)^{p_k(p_k+1)/2}\prod_{i=1}^{p_k} E_{[i,i]}^{p_k - i + 1} B_{[i,i]}^{i - p_k - 1}}\\
    &= \int_{\mathbb{R} \times \mathcal{G}_{p_k}^+} f(eE) \frac{b^{p_k(p_k+1)/2 - 1} B_{[1,1]}^{p_k}\prod_{i=2}^{p_k}B_{[i,i]}^{p_k-1} J(e/b,EB^{-1}) de dE}{e^{p_k(p_k+1)/2}\prod_{i=1}^{p_k} E_{[i,i]}^{p_k - i + 1}}\\
    &= \int_{\mathbb{R} \times \mathcal{G}_{p_k}^+} f(eE) \frac{b^{p_k(p_k+1)/2 - 1} B_{[1,1]} J(e/b,EB^{-1}) de dE}{e^{p_k(p_k+1)/2}\prod_{i=1}^{p_k} E_{[i,i]}^{p_k - i + 1}},
  \end{align*}
  where we used (\ref{equation:gpk.jacobian}) for the first equality and our parameterization of $\mathcal{G}_{p_k}^+$, $\prod_{i = 2}^{p_k}B_{[i,i]}^{-1} = B_{[1,1]}$, for the last equality. So we must have that
  \begin{align*}
    J(\sigma,\Psi) = b^{p_k(p_k+1)/2 - 1} B_{[1,1]} J(\sigma/b,\Psi B^{-1}).
  \end{align*}
  Set $B = \Psi$ and $b = \sigma$ to obtain: $J(\sigma,\Psi) = \sigma^{p_k(p_k+1)/2 - 1} \Psi_{[1,1]} J(1,I)$, where $J(1,I)$ is a constant.
\end{proof}

\subsection{Proof of Lemma \ref{lemma:bartlett.inv.wishart}}
Let $S^{-1} \sim \mbox{Wishart}_p(\nu,I_{p})$ and partition $S^{-1}$ and $S \sim \mbox{inverse-Wishart}_p(\nu,I_{p})$ conformably such that $p_1 + p_2 = p$:
\begin{align*}
  S^{-1} = 
  \left(
    \begin{array}{cc}
      S^{11} & S^{12}\\
      S^{21} & S^{22}\\
    \end{array}
  \right),
  \hspace{5 mm} S =
  \left(
    \begin{array}{cc}
      S_{11} & S_{12}\\
      S_{21} & S_{22}\\
    \end{array}
  \right).
\end{align*}
Denote $S^{11 \bullet 2} = S^{11} - S^{12}(S^{22})^{-1}S^{21}$, the Schur complement. The following are well known properties of the Wishart distribution (see, for example, Proposition 8.7 of \cite{eaton1983multivariate})
\begin{align*}
S^{22} &\sim \mbox{Wishart}_{p_2}(I_{p_2},\nu)\\
S^{21}|S^{22} &\sim N_{p_2 \times p_1}(0,S^{22} \otimes I_{p_1})\\
S^{11\bullet 2} &\sim \mbox{Wishart}_{p_1}(I_{p_1},\nu - p_2)\\
S^{11\bullet 2} &\mbox{ is independent of } \{S^{22},S^{21}\}
\end{align*}
The relationship of the inverse of a partitioned matrix (see, for example, Section 0.7.3 of \cite{horn2012matrix}) implies that
\begin{align}
S_{11} &= (S^{11 \bullet 2})^{-1} \sim \mbox{inverse-Wishart}_{p_1}(I_{p_1},\nu - p_2) \label{equation:s11}\\
S_{22\bullet 1} &= (S^{22})^{-1} \sim \mbox{inverse-Wishart}_{p_2}(I_{p_2},\nu) \label{equation:s22dot1}\\
\begin{split}\label{equation:s21}
S_{21}|S_{11},S_{22\bullet 1} &\overset{d}{=} -(S^{22})^{-1}S^{21}(S^{11\bullet 2})^{-1}\\
&\sim N_{p_2 \times p_1}(0,(S^{22})^{-1} \otimes (S^{11\bullet 2})^{-1}(S^{11\bullet 2})^{-1})\\
&= N_{p_2 \times p_1}(0,S_{22\bullet 1} \otimes S_{11}S_{11}).
\end{split}
\end{align}
It is also well known that 
\begin{align}
\mbox{if } p = 1 \mbox{ then } S \sim \mbox{ inverse-gamma}(\nu/2,1/2). \label{equation:pequals1}
\end{align}

We should be able to use these results to come up with the distribution of the elements of the lower triangular Cholesky decomposition from an inverse-Wishart distributed random matrix, which seems surprisingly difficult to find in the literature.

\begin{proof}[Proof of Lemma \ref{lemma:bartlett.inv.wishart}]
  We proceed by induction on the dimension. It is clearly true for $n = 1$. Assume it is true for $n - 1$. Then partition $S_{[1:n,1:n]} \sim \mbox{ inverse-Wishart}_n(I_{n},\nu - p + n)$ such that the top left submatrix, $S_{11}$, is $n-1$ by $n-1$.
  \begin{align*}
    S_{[1:n,1:n]} = 
    \left(
      \begin{array}{cc}
        S_{11} & S_{12}\\
        S_{21} & s_{22}\\
      \end{array}
    \right)
    =
    \left(
      \begin{array}{cc}
        W_1 & 0 \\
        S_{21}W_1^{-T} & s_{22 \bullet 1}^{1/2} \\
      \end{array}
    \right)
    \left(
      \begin{array}{cc}
        W_1^T & W_1^{-1}S_{12} \\
        0 & s_{22 \bullet 1}^{1/2} \\
      \end{array}
    \right).
  \end{align*}

  Note that $S_{11} = W_1W_1^T$. Using (\ref{equation:s11})-(\ref{equation:pequals1}), we have that:
  \begin{align*}
    W_{[n,n]}^2 = s_{22 \bullet 1} &\sim \mbox{ inverse-gamma}\left( (\nu - p + n)/2, 1/2 \right)\\
    S_{21}W_1^{-T}|W_1,s_{22 \bullet 1} &= S_{21}{S_{11}^{-1/2}}^T | S_{11},s_{22\bullet 1}\\
& \sim N_{1 \times n - 1}\left(0,\left(s_{22 \bullet 1} \otimes W_1^TW_1\right)\right) \\
&=  N_{n-1}(0,s_{22 \bullet 1}W_1^TW_1) \\
&= N_{n-1}(0,W_{[n,n]}^2W_1^TW_1).
  \end{align*}
\end{proof}

\subsection{Proof of Lemma \ref{lemma:lt.wishart.inverse}}

\begin{proof}
  We proceed by induction on the dimension. It is clearly true for $n = 1$. Assume it is true for $n-1$. Note that for lower triangular matrices, the $[1:n,1:n]$ submatrix of the inverse is the inverse of the $[1:n,1:n]$ submatrix. Hence, partition $W_{k[1:n,1:n]} = V_{k[1:n,1:n]}^{-1}$  by:
  \begin{align*}
    V_{k[1:n,1:n]} = 
    \left(
      \begin{array}{cc}
        V_{11} & 0 \\
        V_{21} & v_{22} \\
      \end{array}
    \right),
    \hspace{5 mm}
    W_{k[1:n,1:n]} = 
    \left(
      \begin{array}{cc}
        W_{11} & 0 \\
        W_{21} & w_{22} \\
      \end{array}
    \right),
  \end{align*}
  where the top left submatrix is $n-1$ by $n-1$. Then $v_{22}^2 = 1/w_{22}^2$ is clearly $\chi^2_{\nu - n + 1}$. We have that $V_{21} = -w_{22}^{-1}W_{21}W_{11 \bullet 2}^{-1}$. Also, $W_{11 \bullet 2} = W_{11} - W_{21}*0 / w_{22} = W_{11}$.
  Since
  \begin{align*}
    W_{21}|W_{11},w_{22} \sim N_{n-1}\left(0,w_{22}^2 W_{11}^TW_{11}\right),
  \end{align*}
  we have that
  \begin{align*}
    -w_{22}^{-1}W_{21}W_{11}^{-1}|W_{11},w_{22} \sim N_{n-1}\left(0,I\right).
  \end{align*}
  Hence, the result is proved.
\end{proof}

\subsection{Proof of Proposition \ref{prop:mree}}
\begin{proof}
  This minimization problem is equivalent to minimizing
  \begin{align*}
    &s^2 \sum_{k=1}^{K} \frac{p}{p_k} \tr\left( S_k E\left[\left(\sigma^2\Sigma_k\right)^{-1}\right]\right) - Kp\log\left( s^2 \right)\\
    &= s^2 \sum_{k=1}^{K} \frac{p}{p_k} \tr\left( S_k \mathcal{E}_k^{-1}\right) - Kp\log\left( s^2 \right).
  \end{align*}
  
  Let us absorb the scale parameter into $S_k$. That is, let $\tilde{S}_k = s^2 S_k$, then $s^2 = |\tilde{S}_k|^{1/p_k}$, and we wish to minimize with respect to $\tilde{S}_k$:
  \begin{align*}
    \frac{p}{p_k} \tr\left(\tilde{S}_k \mathcal{E}_k^{-1} \right) - \frac{Kp}{p_k}\log\left( |\tilde{S}_k| \right) + |\tilde{S}_k|^{1/p_k} \sum_{j \neq k} \frac{p}{p_j} \tr\left( S_j^T \mathcal{E}_j^{-1}\right).
  \end{align*}
  Letting $\lambda = \frac{p_k}{p}\sum_{j \neq k} \frac{p}{p_j} \tr\left( S_j^T \mathcal{E}_j^{-1}\right)$, this is equivalent to minimizing:
  \begin{align*}
    \tr\left(\tilde{S}_k \mathcal{E}_k^{-1} \right) - K\log\left( |\tilde{S}_k| \right) + |\tilde{S}_k|^{1/p_k} \lambda
  \end{align*}
  with respect to $\tilde{S}_k$.\\
  
  Since the mapping $\tilde{S}_k \mapsto \mathcal{E}_k^{-1/2} \tilde{S}_k \mathcal{E}_k^{-1/2} = \Omega$ is a bijection of the set of $p_k \times p_k$ symmetric positive definite matrices, we can write:
  \begin{align*}
    &\min_{\tilde{S}_k > 0} \left\{ \tr\left(\tilde{S}_k \mathcal{E}_k^{-1} \right) - K\log\left( |\tilde{S}_k| \right) + |\tilde{S}_k|^{1/p_k} \lambda \right\}\\
    &= \min_{\Omega > 0} \left\{ \tr\left(\Omega \right) - K\log\left( |\Omega| \right) + |\Omega|^{1/p_k} \lambda^* + K\log(|\mathcal{E}_k|) \right\}\\
    &= \min_{\omega_1 \geq \cdots \geq \omega_{p_k} > 0} \left\{ \sum_{i=1}^{p_k} \omega_i - K\sum_{i = 1}^{p_k} \log(\omega_i) + \lambda^* \prod_{i=1}^{p_k}\omega_i^{1/p_k} \right\},
  \end{align*}
  where $\lambda^* = \lambda|\mathcal{E}_k|^{1/p_k}$ and $\omega_1,\omega_2,\ldots,\omega_{p_k}$ are the ordered eigenvalues of $\Omega$. Taking derivatives with respect to $\omega_j$ and setting equal to 0, we have:
  \begin{align*}
    &1 - \frac{K}{\omega_j} + \frac{1}{p_k}\omega_j ^{1/p_k -1}\lambda^* \prod_{i \neq j} \omega_i^{1/p_k} = 0\\
    &\Leftrightarrow \omega_j =  K - \frac{1}{p_k}\lambda^* \prod_{i = 1}^{p_k} \omega_i^{1/p_k} \mbox{ for all } j = 1,\ldots,p_k.
  \end{align*}
  So all of the eigenvalues have the same critical value.\\
  
  Taking second derivatives, we have:
  \begin{align*}
    &\frac{K}{\omega_j^2} - \frac{p_k - 1}{p_k^2}\lambda^* \omega^{1/p_k - 2} \prod_{i \neq j}^{p_k} \omega_j^{1/p_k} > 0 \Leftrightarrow K - \frac{p_k - 1}{p_k^2}\lambda^* \prod_{j = 1}^{p_k} \omega_j^{1/p_k} > 0\\
    \Leftrightarrow &K + \frac{p_k - 1}{p_k}\left(K - \frac{1}{p_k}\lambda^* \prod_{j = 1}^{p_k} \omega_j^{1/p_k} - K \right) > 0\\
    \Leftrightarrow &K + \frac{p_k - 1}{p_k}\left(\omega_j - K \right) > 0 \Leftrightarrow \frac{p_k - 1}{p_k}\omega_j + K\frac{1}{p_k} > 0.
  \end{align*}
  Hence, by a second derivative test, this critical value is a minimizer for all $\omega_j$. This is a global minimum since
  \[
  \mbox{as } \omega_1 \rightarrow \infty \mbox{ we have that } \left\{ \sum_{i=1}^{p_k} \omega_i - K\sum_{i = 1}^{p_k} \log(\omega_i) + \lambda^* \prod_{i=1}^{p_k}\omega_i^{1/p_k} \right\} \rightarrow \infty
  \]
  and
  \[
  \mbox{as } \omega_{p_k} \rightarrow 0 \mbox{ we have that } \left\{ \sum_{i=1}^{p_k} \omega_i - K\sum_{i = 1}^{p_k} \log(\omega_i) + \lambda^* \prod_{i=1}^{p_k}\omega_i^{1/p_k} \right\} \rightarrow \infty.
  \]
  This implies that all of the $\omega_j$ are equal. In particular, that $\omega_j = (Kp_k)/(p_k + \lambda^*)$ for all $j = 1,\ldots,p_k$. This in turn implies that $\Omega$ is a constant multiple of the identity. Thus, the $\tilde{S}_k$ that minimizes the risk given all $S_j$ such that $j \neq k$ is:
  \begin{align*}
    \tilde{S}_k = \frac{Kp_k}{p_k + \lambda^*} \mathcal{E}_k.
  \end{align*}
  But this means that the $S_k$ that minimizes this risk, no matter what the other $S_j$'s are, is
  \begin{align*}
    \hat{\Sigma}_k = \mathcal{E}_k / |\mathcal{E}_k|^{1/(p_k)}.
  \end{align*}  
  It remains to minimize with respect to $s$. The minimizer is the $s$ such that
  \begin{align*}
    2s\sum_{k=1}^{K} \frac{p}{p_k} \tr\left( \hat{\Sigma}_k \mathcal{E}_k^{-1}\right) - \frac{2Kp}{s} = 0.
  \end{align*}
  And solving for $s$ we get
  \begin{align*}
    \hat{\sigma}^2 = \frac{K}{\sum_{k=1}^{K} \frac{1}{p_k}\tr\left( \hat{\Sigma}_k \mathcal{E}_k^{-1} \right) }.
  \end{align*}
  But since $\hat{\Sigma}_k = \mathcal{E}_k / |\mathcal{E}_k|^{1/(p_k)}$, we have that
  \begin{align*}
    \hat{\sigma}^2 = \frac{K}{\sum_{k=1}^{K} |\mathcal{E}_k|^{-1/p_k}}.    
  \end{align*}
\end{proof}

\subsection{Proof of Proposition \ref{prop:takemura}}
\begin{proof}
Let $\Phi_k = \Sigma_k/\tr(\Sigma_k)$, $D_k = S_k / \tr(S_k)$ for $k = 1,\ldots,K$. So $\Sigma_k = \Phi_k/|\Phi_k|^{1/p_k}$ and $S_k = D_k/|D_k|^{1/p_k}$ for $k = 1,\ldots,K$. $\Phi_k$ and $D_k$ both have trace 1. The space of trace 1 symmetric positive definite matrices is convex.  Let $\Phi = (\sigma^2,\Phi_1,\ldots,\Phi_K)$ and $D = (s^2,D_1,\ldots,D_K)$. Define
\begin{align*}
  &L_2\left(\Phi, D\right) = \frac{s^2}{\sigma^2} \sum_{k=1}^{K}\frac{p}{p_k}|D_k\Phi_k^{-1}|^{-1/p_k}\tr\left(D_k\Phi_k^{-1}\right) - Kp\log\left(\frac{s^2}{\sigma^2}\right) - Kp.\\
&\text{So, } L_M\left(\Sigma,S\right) = L_2\left( \Phi, D\right).\\
&\text{Hence, } E\left[\left. L_M\left(\Sigma,S\right)\right| X\right] = E\left[\left. L_2\left( \Phi, D\right)\right| X \right].
\end{align*}
So if $L_2$ is convex in each $D_k$, we can uniformly decrease the risk. That is, given $B_k,E_k \in \mathcal{G}_{p_k}^{+}$ are two estimators from two different special linear group transformations, an estimator that uniformly decreases the risk is found by setting $F_k = (B_k/\tr(B_k) + E_k/\tr(E_k))/2$ and using $F_k/|F_k|^{1/p_k}$ as our estimator. Averaging over the whole space of orthogonal matrices will result in an orthogonally equivariant estimator.

It remains to prove that $L_2$ is convex in each $D_k$. It suffices to show that $|D_k|^{-1/p_k}\tr(D_k\Phi_k^{-1})$ is convex in $D_k$. Since, for $\alpha \in [0,1]$, $\tr((\alpha D_k + (1-\alpha)E_k)\Phi_k^{-1}) = \alpha\tr(D_k\Phi_k^{-1}) + (1-\alpha)\tr(E_k\Phi_k^{-1})$ is convex in $D_k$, if $|D_k|^{-1/p_k}$ is also convex, then we are done. $|D_k|$ is a concave function \citep[Theorem 1]{cover1988determinant}, and $f(x) = \log(x)$ is concave monotonic, so $\log(|D_k|)$ is concave, so $-\log(|D_k|)/p_k$ is convex, so $\exp(-\log(|D_k|)/p_k) = |D_k|^{-1/p_k}$ is convex.

 We also have that $c b^2 - h \log(b^2)$ is convex in $b^2$ for $c,h > 0$, so we can average the scale estimates to decrease risk as well.

To summarize, we have:
\begin{align*}
L_M&\left(\Sigma,\left(f^2,F_1/|F_1|^{1/p_1},\ldots,F_K/|F_K|^{1/p_K}\right)\right)\\
=&L_2\left(\Phi,\left(f^2,F_1,\ldots, F_K\right)\right)\\
=&L_2\left(\Phi,\left((b^2+e^2)/2,B_1/\tr\left(B_1\right) + E_1/\tr\left(E_1\right)\right)/2\right. ,\\
&\left.\left.\ldots,\left(B_K/\tr\left(B_K\right) + E_K/\tr\left(E_K\right)\right)/2\right)\right)\\
\leq& \frac{1}{2}L_2\left(\Phi,\left(b^2,B_1/\tr\left(B_1\right),\ldots, B_K/\tr\left(B_K\right)\right)\right)\\
& + \frac{1}{2}L_2\left(\Phi,\left(e^2,E_1/\tr\left(E_1\right),\ldots,E_K/\tr\left(E_K\right)\right)\right)\\
=& \frac{1}{2}L_M\left(\Sigma,B\right) + \frac{1}{2}L_M\left(\Sigma,E\right).
\end{align*}
If $B$ and $E$ have the same (constant) risk as the UMREE, $\hat{\Sigma}(X)$, then
\begin{align*}
&E\left[L_M\left(\Sigma,\left(f^2,F_1/|F_1|^{1/p_1},\ldots,F_K/|F_K|^{1/p_K}\right)\right)\right]\\
&\leq \frac{1}{2}E\left[L_M\left(\Sigma,B\right)\right] + \frac{1}{2}\left[L_M\left(\Sigma,E\right)\right]\\
&= E\left[L_M\left(\Sigma,\hat{\Sigma}(X)\right)\right]
\end{align*}
\end{proof}

\bibliography{lt_bib}

\end{document}